\let\arXiv\arxiv
\def\doi#1{ {\href{http://dx.doi.org/#1}
   {{\mdseries\ttfamily DOI}}}}
\newcommand{\beq}{\begin{equation}}\newcommand{\eq}{\end{equation}}
\renewcommand{\tt}{ \tilde t}
\newcommand{\B}{{\underline{L}}}
\newcommand{\tpa}{{\overline{\pa}}}
\newcommand{\pa}{{\partial}}
\newcommand{\tphi}{{\tilde{\phi}}}
\newcommand{\rs}{{\tilde r}}
\newtheorem{theorem}{Theorem}
\newtheorem{lemma}[theorem]{Lemma}
\newtheorem{corr}[theorem]{Corollary}
\newtheorem{proposition}[theorem]{Proposition}
\newcommand{\R}{{\mathbb R}}
\newcommand{\ang}{{\not\negmedspace\partial }}
\newcommand{\la}{\langle}
\newcommand{\ra}{\rangle}
\renewcommand{\S}{{\mathbb S}}
\newcommand{\M}{\mathcal M}
\begin{document}

\title
{
 The weak null condition on Kerr backgrounds
}

\author{Hans Lindblad}
\address{Department of Mathematics, Johns Hopkins University, Baltimore,
  MD  21218}
\email{lindblad@math.jhu.edu}
\author{Mihai Tohaneanu}
\address{Department of Mathematics, University of Kentucky, Lexington,
  KY 40506}
\email{mihai.tohaneanu@uky.edu}

\begin{abstract}

We study a system of semilinear wave equations on Kerr backgrounds that satisfies the weak null condition. Under the assumption of small initial data, we prove global existence and pointwise decay estimates.
 \end{abstract}

\mathtoolsset{showonlyrefs=true}

\maketitle

\tableofcontents

\emph{\emph{}}

\section{Introduction}

The semilinear system of wave equations in $\R^{1+3}$
\begin{equation}
\Box \phi = Q[\pa\phi,\pa\phi], \qquad \phi |_{t=0} = \phi_0, \qquad \pa_t \phi |_{t=0} = \phi_1,
\end{equation}
where $Q$ is a quadratic form,
for small initial data has been studied extensively. For the scalar equation, it is known that the solution can blows up in finite time for $\Box \phi = (\pa_t\phi)^2$, see \cite{J}. On the other hand, if the nonlinearity satisfies the null condition by Klainerman \cite{K1}, e.g.  $\Box \phi = (\pa_t\phi)^2-|\pa_x \phi|^2$, it was shown independently in \cite{Ch} and \cite{K2} that the solution exists globally. This result was extended to quasilinear systems with multiple speeds, as well as the case of exterior domains; see, for instance, \cite{MNS}, \cite{MS1}, \cite{MS2}, \cite{H}, \cite{LNS}, \cite{KS}, \cite{Al}, \cite{L0,L1}, \cite{ST}, \cite{FM}. There have also been many works for small data in the variable coefficient case. Almost global existence for nontrapping metrics was shown in \cite{BH}, \cite{SW}. Global existence for stationary, small perturbations of Minkowski was shown in \cite{WY}, for nonstationary, compactly supported perturbations in \cite{Y}, and for large, asymptotically flat perturbations that satisfy the strong local energy decay estimates in \cite{LooiT}. In the context of black holes, global existence was shown in \cite{Luk} for Kerr space-times with small angular momentum, and in \cite{AAG} for the Reissner-Nordstr\"om backgrounds.

Written in harmonic coordinates, the Einstein Equations take the form
\beq\label{MinkEins}
  \Box_{g} g_{\mu\nu} =P[\pa_\mu g,\pa_\nu g]+Q_{\mu\nu}[\pa g,\pa g],
\eq
where $\Box_g$ is the wave operator on the background of the Lorentzian metric $g$, and $P$ and $Q_{\mu\nu}$ are quadratic forms with coefficients depending on the metric. Unfortunately the nonlinear terms do not satisfy the null condition. Yet Christodoulou-Klainerman\cite{CK2} were able to prove global existence for Einstein vacuum equations ${R}_{\mu\nu}\!\equiv\!0$ for small asymptotically flat
initial
data.
Their proof avoids using coordinates since it
 was believed the metric in harmonic coordinates would blow up for large times.
 However, later Lindblad-Rodnianski \cite{LR1} noticed that Einstein's equations in harmonic coordinates satisfy a weak null condition, and subsequently used it to prove stability of Minkowski in harmonic coordinates \cite{LR2}, \cite{LR3}. Whereas it is still unknown if general equations satisfying the weak null condition have global existence for small initial data, there has been a number of results in that direction, including detailed asymptotics of the solution, see for example \cite{Al}, \cite{L0,L1,L2}, \cite{K18}, \cite{DP}, \cite{Yu1}, \cite{Yu2}.

There has recently been a lot of activity in proving asymptotic stability of black holes. As a
first step people have proved decay of solutions to wave equations
on Schwarzschild and Kerr background \cite{BS1,BS2,BSter,MMTT,DR1} and \cite{TT,DRS16,AB}. People have also studied semilinear perturbations
\cite{Luk,IK15} satisfying the null condition, but apart from our
recent papers \cite{LT1,LT2}, little is known about quasilinear
perturbations or semilinear perturbations satisfying the weak null condition. There has more recently been  progress on the nonlinear stability of Schwarzschild and Kerr \cite{KS1,KS2,KS3,DHRT21,GKS2}. 
These proofs are very long, using sophisticated geometric constructions. We hope that studying models of Einstein's equations in wave coordinates will simplify the proofs and lead to a better understanding and extensions as it did for the stability of Minkowski space.

Finally we remark that there are several recent works on the cosmological case. Hintz-Vasy proved the stability of Kerr de Sitter with small angular momentum \cite{HV16}, see also  \cite{F21,F22} for an alternative proof. More recently there have been works on the wave equation on Kerr-deSitter background for large angular momentum assuming there are no growing modes \cite{LV22,M22}.

\subsubsection{The semilinear Einstein model}
An example of a simple semilinear systems satisfying the weak null condition, but not the classical null condition, is the  system
\[
\Box \phi_1 = (\pa_t \phi_2)^2, \qquad \Box\phi_2 = 0
\]
It is trivial to see that this has global solutions, and moreover that $\phi_1$ decays slower than $1/t$. A less trivial  example is the semilinear system
\[\label{eq:decoupledsystem}
\Box \phi_1 = (\pa_t \phi_2)^2 + Q_1[\pa\phi, \pa\phi], \qquad \Box\phi_2 = Q_2[\pa\phi, \pa\phi]
\]
where $Q_j$ are null forms. These systems have the advantage the components $\phi_1$ and $\phi_2$ decouple to highest order. For Einstein's equations there is the additional difficulty that this decoupling can only be seen in a null frame, and contractions with the frame do not commute with the wave operator as far as the $L^2$ estimate. Hence a more realistic model is the system is
\beq
\Box \phi_{\mu\nu} = P[\pa_\mu \phi,\pa_\nu \phi]+Q_{\mu\nu}[\pa \phi,\pa\phi],
\eq
where $P$ is assumed to have a certain weak null structure.
Contracting with a nullframe this resembles the decoupled systems with $\phi_{\underline{L}\underline{L}}$ in place of $\phi_1$, where $\underline{L}^\mu\pa_\mu =\pa_t-\pa_r$, and $\phi_2$ replaced by the other components $\phi_{TU}$ where $T$ is tangential to the outgoing light cones.
The only really bad component is $\pa \phi_{\underline{L}\underline{L}}$ but this one does not show up quadratically in $P$ for Einstein's Equations. It shows up linearly but multiplied with a component $\pa \phi_{LL} $ that has vanishing radiation field due to the wave coordinate condition.

With the goal of understanding Einstein's Equations in (generalized) harmonic coordinates close to Kerr with small angular momentum, we will focus on the following system, which resembles the semilinear part of Einstein's equations:
\beq\label{eq:Einsteinsemilinearmodel}
\Box_{K} \phi_{\mu\nu}=P[\pa_\mu \phi,\pa_\nu \phi]+Q_{\mu\nu}[\pa \phi,\pa\phi], \qquad \tt\geq 0,\qquad \phi |_{\tt=0} = \phi_0, \qquad \tilde T \phi|_{\tt=0} = \phi_1.
\end{equation}

Here $\Box_K$ denotes the d'Alembertian with respect to the Kerr metric, and $\tilde T$ is a smooth, everywhere timelike vector field that equals
$\partial_t$ away from the black hole.  The coordinate $\tt$
is chosen so that the slice $\tt=const$ are space-like and $\tt=t$
away from the black hole. For simplicity we will consider compactly supported smooth initial data, but suitably weighted Sobolev spaces of large enough order would suffice.
Moreover, $Q_{\mu\nu}$ are null forms and $P$ is a symmetric quadratic form:
\begin{equation}
P[\phi,\psi]=P^{\alpha\beta\gamma\delta}(x/ \tt) \, \phi_{\alpha\beta}\psi_{\gamma\delta},
\end{equation}
with coefficients with a certain weak null structure.
 We remove the component $\pa \phi_{\underline{L}\underline{L}}$ by imposing the condition
\beq
P^{\underline{L}\underline{L}\alpha\beta}(x/ \tt) =P^{\alpha\beta\underline{L}\underline{L}}(x/ \tt) =0.
\eq

For this system we cannot have different
energy estimates for different components because the null structure is only seen in a null frame and contractions with the frame do not commute with the wave operator.  Because of this one can not get the decay estimates directly from the $L^2$ estimates but one has to use the equations again to get improved decay estimates.
 As a result, the proof is more involved.
The method we develop avoids boosts vector fields and combines local energy decay at the origin with estimates in characteristic coordinates at the light cone. It gives an essentially optimal decay of almost $\tt^{-1}$, which is an improvement over $\tt^{-1/2}$ which can be obtained more easily from energy estimates. The method in particular works close to  Minkowski  where it gives the optimal decay without using boosts.

Finally we remark that this system can be combined with the quasilinear system that we previously studied \cite{LT1}, \cite{LT2} (see also \cite{Looi} for improved pointwise bounds) to resemble also the quasilinear part of Einstein's equations
\beq\label{eq:Einsteinquasilinearmodel}
\Box_{g[\phi]} \phi_{\mu\nu}=P[\pa_\mu \phi,\pa_\nu \phi]+Q_{\mu\nu}[\pa \phi,\pa\phi],
\eq
where
$$
g^{\alpha\beta}[\phi]=K^{\alpha\beta}+H^{\alpha\beta}[\phi], \qquad\text{where}\quad
H^{\alpha\beta}[\phi]=H^{\alpha\beta\mu\nu}(x/ \tt) \, \phi_{\mu\nu},\quad  \text{and}\quad H^{\underline{L}\underline{L}\mu\nu}(x/\tt) =0.
$$

\subsubsection{Statement of the results}
We are now ready to state the our main result. We define $\rs$ to be some function that equals $r$ near the event horizon, and $r_K^*$ away from it, see Section 2 for more details.

\begin{theorem}\label{mainthm}

Fix $R_0>r_e$, and assume that $\phi_0$, $\phi_1$ are smooth and compactly supported in $\rs\leq R_0$. Then there exists a global classical solution to \eqref{eq:Einsteinsemilinearmodel}, provided that, for a certain $\epsilon_0 \ll 1$ and large enough $N$, we have
\[
\mathcal{E}_{N}(0)=\|(\phi_{0}, \phi_1)\|_{H^{N+1}\times H^N} \leq \epsilon_0.
\]
Moreover, for some fixed positive integer $m$, independent of $N$, we have for any $\delta>0$
\begin{equation}\label{ptwseapbdsbadLEintro}
|\phi_{\leq N-m}| \lesssim \frac{\mathcal{E}_{N}(0)}{\la \tt-\rs\ra^{\delta}\la \tt\ra^{1-\delta}}, \qquad |\partial \phi_{\leq N-m}| \lesssim \frac{\tt^{\delta}\mathcal{E}_{N}(0)}{r\la \tt-\rs\ra^{1+\delta}},
\end{equation}
\beq\label{ptwseapbdsgoodLEintro}
|(\partial \phi_{TU})_{\leq N-m}| \lesssim \frac{\mathcal{E}_{N}(0)}{r \la \tt-\rs\ra^{1-\delta}}. 
\eq

\end{theorem}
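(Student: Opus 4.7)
The plan is a standard continuity/bootstrap argument on a combined high-order energy and pointwise norm whose target sizes are dictated by \eqref{ptwseapbdsbadLEintro}--\eqref{ptwseapbdsgoodLEintro}. Because Kerr admits no Lorentz boosts, I would commute \eqref{eq:Einsteinsemilinearmodel} only with the admissible generators: the stationary Killing vector extending $\pa_t$, the axial rotation, a smoothly modified full rotation group that coincides with $\Omega_{ij}$ outside a neighborhood of the horizon, and the scaling $S=\tt\pa_{\tt}+\rs\pa_{\rs}$. Writing $Z^\alpha$ for iterated commutators up to order $N$, the commuted equation is $\Box_K Z^\alpha\phi = Z^\alpha\bigl(P[\pa\phi,\pa\phi]+Q[\pa\phi,\pa\phi]\bigr)+[\Box_K,Z^\alpha]\phi$, and the commutator defect is lower order and supported essentially in the trapping/horizon region, where it will be absorbed by local energy decay.

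For the $L^2$ step I would combine the redshift and integrated local energy decay estimates on small-angular-momentum Kerr (from \cite{TT,MMTT}) with the Dafermos--Rodnianski $r^p$-weighted hierarchy at null infinity, run simultaneously on the full commuted collection $\{Z^\alpha\phi\}$. The weak null structure of $P$ is crucial here: in a null frame, the only quadratic term that would prevent closure is one involving $\pa\phi_{\uL\uL}$, and the structural assumption $P^{\uL\uL\alpha\beta}=P^{\alpha\beta\uL\uL}=0$ removes precisely that interaction. Every other $P$-contraction and every $Q_{\mu\nu}$ null form decomposes into products where at least one factor is tangential (an $L$, $T$, or angular derivative), which yields the extra $\la\tt-\rs\ra^{-1}$ needed in the bulk integrals to absorb the $P$-source into the $\LE$ and $r^p$ norms.

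For the pointwise step, which is the heart of the argument, energy plus Klainerman--Sobolev (without boosts) gives only $\tt^{-1/2}$, so one must re-enter the equation. I would work in characteristic coordinates $q=\tt-\rs$, $s=\tt+\rs$ in the exterior region and integrate the rescaled identity $\rs\,\Box_K(\rs\phi)=-4\pa_q\pa_s(\rs\phi)+\text{(angular, trapping, and Kerr-perturbation terms)}$ along cones. The bootstrap bound \eqref{ptwseapbdsgoodLEintro} on the good components $\pa\phi_{TU}$ follows by integrating from data in the $q$ direction, since on the right-hand side the null forms $Q_{\mu\nu}$ and the tangential contractions of $P$ contribute a source that is integrable at a rate $\la\tt-\rs\ra^{-1-\delta}$. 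For the bad component $\phi_{\uL\uL}$ we integrate in the $s$ direction: because $P^{\uL\uL\alpha\beta}=0$, no $\pa\phi_{\uL\uL}$ appears quadratically, so the source is controlled by already-improved tangential quantities, and the resulting integral loses only $\tt^\delta$, exactly as in \eqref{ptwseapbdsbadLEintro}.

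The main obstacle is the coordination of three distinct regimes: the horizon/trapping region where only \cite{TT,MMTT}-type local energy decay is available, the far asymptotic region where the $r^p$-hierarchy operates, and the wave zone $\rs\sim\tt$ where the characteristic-coordinate reintegration produces the improved pointwise rates. Matching the weight powers across these zones is delicate, and the absence of boosts means a small $\tt^\delta$ loss on $\pa\phi$ is unavoidable; closure therefore depends on making the $\delta$-losses compound only sub-linearly in the commutator hierarchy. The second subtle point is maintaining the null frame decomposition under commutation by rotations and scaling near the horizon, where frame vectors are no longer Killing; this forces careful bookkeeping of the identity $P^{\uL\uL\alpha\beta}=0$ at every commutator level so that the $\phi_{\uL\uL}$ cancellation survives.
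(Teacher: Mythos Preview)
Your bootstrap architecture and the handling of the energy step are broadly sound, and your use of the weak null structure to ensure every quadratic source carries at least one tangential factor is exactly what the paper exploits in \eqref{sharp} and \eqref{eninhom}. The paper does not invoke the $r^p$ hierarchy; instead its $L^2$ closure rests on the local energy norm together with the weighted tangential term $\|\la\tt-\rs\ra^{-(1+\delta_1)/2}\tpa\phi_{\leq N}\|_{L^2L^2}$ in \eqref{energy}, but your alternative is plausible.

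The genuine gap is in the pointwise step. Your scheme---rewriting $\Box_K$ in characteristic form and integrating in $q$ or $s$---is essentially Lemma~\ref{lem:transversalder} of the paper, and the paper does use precisely this to obtain \eqref{ptwseapbdsgoodLEintro} for $(\pa\phi_{TU})$ near the cone. But this mechanism operates only in the wave zone $\rs\gtrsim\tt/4$; it cannot upgrade $\tt^{-1/2}$ to $\tt^{-1}$ in the region $\rs\ll\tt$, which includes the trapped set. There you have no characteristic direction along which the source is integrable, and local energy decay alone scales like $\tt^{-1/2}$. The paper closes this region by a two-step device you do not mention: first it writes $\Box\phi_\alpha$ via \eqref{Boxeq} and applies the positive Minkowski fundamental solution (Lemma~\ref{Minkdcy}) to convert the pointwise source bounds into $r^{-1}$-type decay for $\phi$; then it invokes Lemma~\ref{lv:smallr} (an $L^2$ estimate on $C_T^{<T/2}$ that trades $r$-weights for $T^{-1}$) to turn $r$-decay into $\tt$-decay in the interior. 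These two ingredients are iterated with Corollary~\ref{l:LinftyderL2} and Lemma~\ref{lem:transversalder} to pass from $\tt^{-1/2+C\epsilon}$ to $\tt^{-1+C\epsilon}$ and finally to $\tt^{-1}$. Without an analogue of the fundamental-solution step and the $r\to\tt$ conversion lemma, your proposal has no mechanism to recover the bounds \eqref{ptwseapbdsbadLEintro} near the photon sphere, and the bootstrap will not close there.
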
 Note that is an improvement of the decay estimates we previously proved essentially by a factor of $\tt^{-1/2}$.
Note also the structure here, that a derivative decreases the homogeneity, but because the homogeneous vector fields we can use together with the wave operator do not span the tangent space at the origin or at the light cone a derivative only improves by a power of $r$ close to the origin and a power of $\tt-\rs$ close to the light cone. Note also that close to the light cone we have a better estimate for the good components which is due to the weak null structure.

\subsubsection{Structure of the proof} The starting point is the local energy estimate in  Section \ref{sec:localenergy}. The local energy scales like the energy which is consistent with a decay $\tt^{-1/2}$ of order $-1/2$ for $\phi$ and $-3/2$ for the derivatives, and this is also the decay we were able to obtain in our previous paper from a bound of the local energy applied to scaling and rotation vector fields, see Section \ref{sec:weakdecay}. Assuming these decay estimates one can go back into the equation and get improved decay estimates. In fact from these decay estimates the total decay of the inhomogeneous term would be $-3$ which would be consistent with a solution of the wave equation with decay of order $-1$. We prove this using $L^\infty$ estimates for the wave operator from Section \ref{sec:sharpdecay}. However the first improved estimates we obtain have the improved decay in $r$ or $\tt-\rs$ and we need improved decay in $\tt$. For this we have other estimates turn decay in $r$ or $\tt-\rs$ into decay in $\tt$, see Section \ref{sec:improvedbounds}. The whole argument is put together in the last section.

The paper is structured as follows. In Section 2 we introduce the Kerr metric, the vector fields we will use, and the local energy estimates which will play a key role in the proof. Sections 3, 4 and 5, and 6 contain various estimates that will allow us to extract the necessary pointwise bounds for (vector fields applied to) the solution. Finally, Section 7 contains the bootstrap argument.

\section{The Kerr metric and local energy estimates}\label{sec:localenergy}

\subsection{The Kerr metric}\label{sec:thecoordinates}\hfill

 The Kerr geometry in Boyer-Lindquist coordinates is given by
\[
ds^2 = g^K_{tt}dt^2 + g_{t\phi}dtd\phi + g^K_{rr}dr^2 + g^K_{\phi\phi}d\phi^2
 + g^K_{\theta\theta}d\theta^2,
\]
 where $t \in \R$, $r > 0$, $(\phi,\theta)$ are the spherical coordinates
on $\S^2$ and
\[
 g^K_{tt}=-\frac{\Delta-a^2\sin^2\theta}{\rho^2}, \qquad
 g^K_{t\phi}=-2a\frac{2Mr\sin^2\theta}{\rho^2}, \qquad
 g^K_{rr}=\frac{\rho^2}{\Delta},
 \]
\[ g^K_{\phi\phi}=\frac{(r^2+a^2)^2-a^2\Delta
\sin^2\theta}{\rho^2}\sin^2\theta, \qquad g^K_{\theta\theta}={\rho^2},
\]
with
\[
\Delta=r^2-2Mr+a^2, \qquad \rho^2=r^2+a^2\cos^2\theta.
\]

 Here $M$ represents the mass of the black hole, and $aM$ its angular momentum.

 A straightforward computation gives us the inverse of the metric:
\[ g_K^{tt}=-\frac{(r^2+a^2)^2-a^2\Delta\sin^2\theta}{\rho^2\Delta},
\qquad g_K^{t\phi}=-a\frac{2Mr}{\rho^2\Delta}, \qquad
g_K^{rr}=\frac{\Delta}{\rho^2},
\]
\[ g_K^{\phi\phi}=\frac{\Delta-a^2\sin^2\theta}{\rho^2\Delta\sin^2\theta}
, \qquad g_K^{\theta\theta}=\frac{1}{\rho^2}.
\]

The case $a = 0$ corresponds to the Schwarzschild space-time.  We shall
subsequently assume that $a$ is small $0 < a \ll M$, so that the Kerr
metric is a small perturbation of the Schwarzschild metric. Note also that the coefficients depend only $r$ and $\theta$ but are independent of
$\phi$ and $t$. We denote the d'Alembertian associated to the Kerr metric by $\Box_K$.

In the above coordinates the Kerr metric has singularities at $r = 0$,
on the equator $\theta = \pi/2$, and at the roots of $\Delta$, namely
$r_{\pm}=M\pm\sqrt{M^2-a^2}$. To remove the singularities at $r = r_{\pm}$ we
introduce functions $r_K^*=r_K^*(r)$, $v_{+}=t+r_K^*$ and $\phi_{+}=\phi_{+}(\phi,r)$ so that (see
\cite{HE})
\[
 dr_K^*=(r^2+a^2)\Delta^{-1}dr,
\qquad
 dv_{+}=dt+dr_K^*,
\qquad
 d\phi_{+}=d\phi+a\Delta^{-1}dr.
\]

Note that when $a=0$ the $r_K^*$ coordinate becomes the Schwarzschild Regge-Wheeler coordinate
\[
r^*=r+2M\log(r-2M)
\]

The Kerr metric can be written in the new coordinates $(v_+, r, \phi_+, \theta)$
\[
\begin{split}
ds^2= &\
-(1-\frac{2Mr}{\rho^2})dv_{+}^2+2drdv_{+}-4a\rho^{-2}Mr\sin^2\theta
dv_{+}d\phi_{+} -2a\sin^2\theta dr d\phi_{+} +\rho^2 d\theta^2 \\
& \ +\rho^{-2}[(r^2+a^2)^2-\Delta a^2\sin^2\theta]\sin^2\theta \,
d\phi_{+}^2
\end{split}
\]
which is smooth and nondegenerate across the event horizon up to but not including
$r = 0$. We introduce the function
\[
\tt = v_{+} - \mu(r),
\]
where $\mu$ is a smooth function of $r$. In the $(\tt,r,\phi_{+},
\theta)$ coordinates the metric has the form
\[
\begin{split}
ds^2= &\ (1-\frac{2Mr}{\rho^2}) d\tt^2
+2\left(1-(1-\frac{2Mr}{\rho^2})\mu'(r)\right) d\tt dr \\
 &\ -4a\rho^{-2}Mr\sin^2\theta d\tt d\phi_{+} + \Bigl(2 \mu'(r) -
 (1-\frac{2Mr}{\rho^2}) (\mu'(r))^2\Bigr)  dr^2 \\
 &\ -2a (1+2\rho^{-2}Mr\mu' (r))\sin^2 \theta dr d\phi_{+} +\rho^2
 d\theta^2 \\
 &\ +\rho^{-2}[(r^2+a^2)^2-\Delta a^2\sin^2\theta]\sin^2\theta
d\phi_{+}^2.
\end{split}
\]

On the function $\mu$ we impose the following two conditions:

(i) $\mu (r) \geq  r_K^*$ for $r > 2M$, with equality for $r >
{5M}/2$.

(ii)  The surfaces $\tt = const$ are space-like, i.e.
\[
\mu'(r) > 0, \qquad 2 - (1-\frac{2Mr}{\rho^2}) \mu'(r) > 0.
\]
As long as $a$ is small, we can use the same
function $\mu$ as in the case of the Schwarzschild space-time in \cite{MMTT}.

We also introduce
\[
\tphi = \zeta(r)\phi_{+}+(1-\zeta(r))\phi,
\]
where $\zeta$ is a cutoff function supported near the event horizon.

We fix $r_e$ satisfying $r_-<r_e<r_+$. The choice of $r_e$ is unimportant,
and for convenience we may simply use $r_e=M$ for all Kerr metrics
with $a/M\ll 1$. Let $\M =  \{ \tt \geq 0, \ r \geq r_e \}$, $ \Sigma(T) =  \M \cap \{ \tt = T \}$, and $ d\Sigma_K $ be the induced volume element on $\Sigma(T)$.

 Let $\tilde r$ denote a smooth strictly increasing function (of $r$) that equals $r$ for $r\leq R$ and $r_K^*$ for $r\geq 2R$ for some large $R$. We will use the coordinates $(\tt, x^{i})$, where $x^i =  \rs\omega$. Note that, since $r\approx \rs$, we can use $r^k$ and $\rs^k$ interchangeably when defining our spaces of functions in what follows.

\subsection{Vector fields and spaces of functions}\label{sec:vf}\hfill

 Our favorite sets of vector fields will be
\[
\partial = \{ \partial_{\tt}, \partial_i\}, \qquad \Omega = \{x^i \partial_j -
x^j \partial_i\}, \qquad S = \tt \partial_{\tt} + \tilde r \partial_{\tilde r},
\]
namely the generators of translations, rotations and scaling. We set
$Z = \{ \partial,\Omega,S\}$.

We also denote by $\ang$ the angular derivatives,
\[
\pa_j = \frac{x^i}{\rs} \pa_{\tilde r} + \ang_i
\]
and let
\[
\overline{\pa} := (\pa_v, \ang), \quad \pa_v = \pa_{\tt} + \pa_{\tilde r}
\]
denote the tangential derivatives. We also let $\underline{L} = \pa_{\tt}-\pa_{\rs}$.

For a triplet $\alpha=(i, j, k)$ we define $|\alpha| = i + 3j + 3k$ and
\[
u_{\alpha} = \pa^i \Omega^j S^k u, \quad u_{\leq m} =
(u_{\Lambda})_{|\Lambda|\le m}
\]

Given a norm $\|\cdot\|_X$, we write
\[
\| u_{\leq m}\|_X = \sum_{|\Lambda|\le m} \| u_{\Lambda}\|_X
\]

We define the classes $S^Z(r^k)$ of
functions in $\R^+ \times \R^3$ by
\[
 f \in S^Z(r^k) \Longleftrightarrow
|Z^j f(t, x)| \leq c_{j} \la r\ra^{k}, \quad j \geq 0.
\]

Given a family of functions $\mathcal{G}$, we will also use the notation
 \[
 f \in S^Z(r^k) \mathcal{G}
 \]
 to mean that
 \[
 f = \sum h_i g_i, \quad h_i\in S^Z(r^k), \quad g_i\in \mathcal{G}.
 \]

We will also use the notation $U$ for an element of $S^Z(1) Z$, and $T$ for an element of $S^Z(1) \tpa$.

An important observation is that, since
\[
\pa_v = \frac{\tt-\rs}{\tt}\pa_{\rs} + \frac{1}{\tt} S, \quad \ang\phi \in S^Z(r^{-1}) \Omega\phi
\]
we have
\beq\label{tgimp}
 |\tpa w| \lesssim \frac{\tt-\rs}{r} |\pa w| + \frac{1}{r} |\Omega w|. 
\eq

Moreover, an easy computation gives
\[
[\Box_K, \pa] \phi\in S^Z(r^{-2})\pa\pa^{\leq 1} \phi, \quad [\Box_K, \Omega]\phi \in S^Z(r^{-2})\pa\pa^{\leq 1} \phi,
\]
\[
[\Box_K, S] \phi \in S^Z(1) \Box_K \phi+ S^Z(r^{-2+})\pa \phi + S^Z(r^{-2+})\pa \Omega \phi + S^Z(r^{-2})\pa\pa^{\leq 1} \phi,
\]
and thus by induction we obtain that
\beq\label{comm}
[\Box_K, Z^{\alpha}] \phi = F_1+F_2, \quad F_1\in S^Z(1) (\Box_K \phi)_{\leq |\alpha|}, \quad F_2 \in S^Z(r^{-2+})\pa \phi_{\leq |\alpha|}.
\eq

We now claim that
\beq\label{hghnull}
[Z, \tpa] \in S^Z(1) \tpa + S^Z(r^{-1}) \pa
\eq

Indeed, we compute
\[
[\pa_{\tt}, \tpa] = 0
\]
\[
[\pa_i, \pa_v] = [\ang_i, \pa_{\tilde r}] \in S^Z(r^{-1}) \ang
\]
\[
[\pa_i, \ang] \in S^Z(r^{-1}) \pa
\]
\[
[\Omega, \pa_v] = 0
\]
\[
[\Omega, \ang] \in S^Z(1) \ang
\]
\[
[S, \pa_v] = \pa_v
\]
\[
[S, \ang] \in S^Z(1) \ang
\]

This proves \eqref{hghnull}.

Given vector fields $X$ and $Y$, we define
\[
\phi_{XY}= X^{\alpha}Y^{\beta}\phi_{\alpha\beta}
\]

Similarly, we can write the coefficients $P$ with respect to the vector frame $\{\B, \tpa\}$ as
\[
P^{\alpha\beta\gamma\delta} = P^{\B\B\gamma\delta}\B^{\alpha}\B^{\beta} + \sum P^{TU\gamma\delta} T^{\alpha} U^{\beta}
\]
\[
P^{\alpha\beta\gamma\delta} = P^{\alpha\beta\B\B}\B^{\gamma}\B^{\delta} + \sum P^{\alpha\beta TU} T^{\gamma} U^{\delta}
\]

The assumptions on the coefficients $P^{\alpha\beta\gamma\delta}$ are the following:
\beq\label{Pass}
P^{\alpha\beta\gamma\delta} \in S^Z(1),
\eq
\beq\label{weaknull}
P^{\underline{L}\underline{L}\alpha\beta}=P^{\alpha\beta\underline{L}\underline{L}}=0.
\eq

\eqref{weaknull} means that terms like $\B\phi_{\B\B} \pa\phi$ do not appear on the right hand side of \eqref{eq:Einsteinsemilinearmodel}.

The assumption on the null forms $Q_{\mu\nu}$ is that
\beq\label{Qass}
Q_{\mu\nu}[\pa\phi, \pa\phi] \in S^Z(1) \pa\phi \tpa\phi.
\eq

\subsection{Local energy estimates}\label{sec:LEE}\hfill

We consider a partition
of $ \R^{3}$ into the dyadic sets $A_R= \{R\leq \la \rs\ra  \leq 2R\}$ for
$R \geq 1$.

We now introduce the local energy norm $LE$
\begin{equation}
\begin{split}
 \| u\|_{LE} &= \sup_R  \| \la r\ra^{-\frac12} u\|_{L^2 (\M\cap \R \times A_R)}  \\
 \| u\|_{LE[\tt_0, \tt_1]} &= \sup_R  \| \la r\ra^{-\frac12} u\|_{L^2 (\M \cap [\tt_0, \tt_1] \times A_R)},
\end{split}
\label{ledef}\end{equation}
its $H^1$ counterpart
\begin{equation}
\begin{split}
  \| u\|_{LE^1} &= \| \pa u\|_{LE} + \| \la r\ra^{-1} u\|_{LE} \\
 \| u\|_{LE^1[\tt_0, \tt_1]} &= \| \pa u\|_{LE[\tt_0, \tt_1]} + \| \la r\ra^{-1} u\|_{LE[\tt_0, \tt_1]},
\end{split}
\end{equation}
as well as the dual norm
\begin{equation}
\begin{split}
 \| f\|_{LE^*} &= \sum_R  \| \la r\ra^{\frac12} f\|_{L^2 (\M\cap \R\times A_R)} \\
 \| f\|_{LE^*[\tt_0, \tt_1]} &= \sum_R  \| \la r\ra^{\frac12} f\|_{L^2 (\M \cap [\tt_0, \tt_1] \times A_R)}.
\end{split}
\label{lesdef}\end{equation}

 We also define similar norms for higher Sobolev regularity
\[
\begin{split}
  \| u_{\leq m}\|_{LE^1} &= \sum_{|\alpha| \leq m} \| u_{\alpha}\|_{LE^1} \\
  \| u_{\leq m}\|_{LE^1[\tt_0, \tt_1]} &= \sum_{|\alpha| \leq m} \| u_{\alpha}\|_{LE^1[\tt_0, \tt_1]} \\
  \| u_{\leq m}\|_{LE[\tt_0, \tt_1]} &= \sum_{|\alpha| \leq m} \| u_{\alpha}\|_{LE[\tt_0, \tt_1]},
\end{split}
\]
respectively
\[
\begin{split}
  \| f\|_{LE^{*,k}} &=  \sum_{|\alpha| \leq k}  \| \partial^\alpha f\|_{LE^{*}} \\
  \| f\|_{LE^{*,k}[\tt_0, \tt_1]} &=  \sum_{|\alpha| \leq k}  \| \partial^\alpha f\|_{LE^{*}[\tt_0, \tt_1]}.
\end{split}
\]

Finally, we introduce a weaker version of the local energy
decay norm
\[
\begin{split}
  \| u\|_{LE^1_{w}} &= \| (1-\chi_{ps}) \pa u\|_{LE} + \| \pa_r u\|_{LE} + \| \la r\ra^{-1} u\|_{LE} \\
  \| u\|_{LE^1_{w}[\tt_0, \tt_1]} &= \| (1-\chi_{ps}) \pa u\|_{LE[\tt_0, \tt_1]}  + \| \pa_r u\|_{LE[\tt_0, \tt_1]} + \| \la r\ra^{-1} u\|_{LE[\tt_0, \tt_1]},
\end{split}
\]
To measure the inhomogeneous term, we define
\[
\begin{split}
 \| f\|_{LE^*_w} &= \inf_{f_1+f_2=f} \| f_1\|_{L^1 L^2}+ \| (1-\chi_{ps}) f_2\|_{LE^*} \\
\| f\|_{LE^*_w[\tt_0, \tt_1]} &= \inf_{f_1+f_2=f}  \| f_1\|_{L^1[\tt_0, \tt_1] L^2}+  \| (1-\chi_{ps}) f_2\|_{LE^*[\tt_0, \tt_1]}.
\end{split}
\]

Here $\chi_{ps}$  is a smooth, compactly
supported spatial cutoff function that equals $1$ in a neighborhood of the trapped set. We also define the higher order weak norms as above.

We define the (nondegenerate) energy
\[
E[u](\tt) = \left(\int_{\Sigma(\tt)} |\pa u|^2 d\Sigma_K\right)^{1/2}.
\]

We now fix some $\delta_1 \ll 1$, and define
\beq\label{energy}
\mathcal{E}_N(T) = \sup_{0\leq\tt\leq T} E[\phi_{\leq N}](\tt) + \|\phi_{\leq N}\|_{LE_w^1[0, T]} + \|\la \tt-\rs\ra^{\frac{-1-\delta_1}2} \tpa\phi_{\leq N}\|_{L^2[0, T] L^2(r\geq R_1)}.
\eq

We will need the following local energy estimates for the linear problem:
\begin{lemma}\label{hghorderLE}
Assume that $\Box_K\phi = F$, and $N$ is any nonnegative integer. We then have for any $T\geq 0$ that
\beq\label{wle}
\mathcal{E}_N(T) \lesssim  \mathcal{E}_N(0) + \|F_{\leq N}\|_{L^1[0, T]L^2+ LE^*_w[0, T]} 
\end{equation}
where the implicit constant is independent of $T$.

\end{lemma}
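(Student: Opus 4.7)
The plan is to induct on $N$, reducing the higher-order estimate to the base case $N=0$ via the commutator identity \eqref{comm}. The base case in turn splits into two components: the standard scalar Kerr local energy estimate, and a weighted tangential estimate in the far region.

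For the base case $N=0$, the sub-estimate
\[
\sup_{0 \leq \tt \leq T} E[\phi](\tt) + \|\phi\|_{LE^1_w[0,T]} \lesssim E[\phi](0) + \|F\|_{L^1[0,T]L^2 + LE^*_w[0,T]}
\]
is the scalar local energy estimate for $\Box_K$ with small $|a|/M$, as established in \cite{TT} building on \cite{MMTT}. The additional term $\|\la\tt-\rs\ra^{-(1+\delta_1)/2}\tpa\phi\|_{L^2L^2(r \geq R_1)}$ would be extracted via a Dafermos--Rodnianski type $r^p$-weighted multiplier estimate in $\{r \geq R_1\}$, where $\Box_K$ is a small perturbation of the Minkowski d'Alembertian. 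Concretely, I would pair $\Box_K\phi = F$ with a multiplier proportional to $\la\tt-\rs\ra^{-1-\delta_1}\pa_v\phi$ and integrate by parts on $\{0 \leq \tt \leq T\} \cap \{r \geq R_1\}$; the positive bulk term produces $|\tpa\phi|^2$ with the desired weight, the boundary contribution at $r = R_1$ is absorbed into $\|\phi\|_{LE^1_w}$, the contributions at $\tt = 0, T$ feed into the energy, and the source $F$ is handled by Cauchy--Schwarz against the $L^1L^2 + LE^*_w$ splitting.

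For the inductive step, apply $Z^\alpha$ for $|\alpha| \leq N$. By \eqref{comm},
\[
\Box_K \phi_\alpha = Z^\alpha F + F_1^\alpha + F_2^\alpha, \qquad F_1^\alpha \in S^Z(1) F_{\leq |\alpha|}, \quad F_2^\alpha \in S^Z(r^{-2+})\pa\phi_{\leq |\alpha|}.
\]
Applying the $N=0$ estimate to each $\phi_\alpha$ and summing over $|\alpha| \leq N$ reduces the task to absorbing $\|F_2^{\leq N}\|_{L^1L^2 + LE^*_w}$ into the left-hand side. Split at some large radius $R_1$: for $r \geq R_1$ a dyadic summation exploiting the extra $r^{-2+}$ decay produces a factor $R_1^{-c}$ for some $c > 0$ times $\|\phi_{\leq N}\|_{LE^1_w}$, absorbable once $R_1$ is large enough. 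For $r \leq R_1$ the term is compactly supported in space: away from the trapped set it is controlled by $\|(1-\chi_{ps})\pa\phi_{\leq N}\|_{LE}$; near the trapped set, where $LE^1_w$ sees only $\pa_r\phi$ and $\la r\ra^{-1}\phi$, the missing second derivatives are recovered by elliptic regularity through the equation, and the argument is closed by Grönwall in $\tt$ using the uniform energy bound.

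The main obstacle is absorbing the commutator $F_2^\alpha$ near the trapped photon sphere, where the weak local energy norm degenerates in the full $\pa\phi$. The standard resolution combines elliptic recovery of the missing derivatives via the wave equation with a Grönwall-in-$\tt$ step fed by the uniform energy bound; this is technical but not conceptually new. A secondary subtlety is preserving the sharp weight $\la\tt-\rs\ra^{-(1+\delta_1)/2}$ after commuting $Z^\alpha$ through $\Box_K$, since the scaling and rotation fields interact non-trivially with the Kerr perturbation at spatial infinity and generate error terms that must be tracked carefully in the null-type coordinates used to close the tangential estimate.
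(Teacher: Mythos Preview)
Your inductive scheme has a genuine gap in the compact region $r\leq R_1$. The commutator error $F_2^\alpha\in S^Z(r^{-2+})\pa\phi_{\leq|\alpha|}$ is at \emph{top order} (it contains $\pa\phi_\alpha$), and in the bounded region the factor $r^{-2+}$ provides no smallness. Placing $F_2^\alpha$ in the $f_2$ slot of $LE^*_w$ yields $\|(1-\chi_{ps})F_2^\alpha\|_{LE^*}\lesssim\|\phi_{\leq|\alpha|}\|_{LE^1_w}$ with an $O(1)$ constant, which feeds the left-hand side back into itself and cannot be absorbed. Your fallback near trapping---putting the error into $L^1L^2$ and closing by Gr\"onwall against $\int_0^T E[\phi_{\leq N}](\tau)\,d\tau$---produces a constant of order $e^{CT}$, directly contradicting the lemma's requirement that the implicit constant be independent of $T$. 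The reference to ``elliptic regularity'' does not help: $F_2^\alpha$ involves first derivatives $\pa\phi_{\leq|\alpha|}$, and the wave equation offers no elliptic gain for those at trapping.

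The paper sidesteps this entirely by \emph{not} reducing the $E+LE^1_w$ part to the $N=0$ case. It invokes Theorem~4.5 of \cite{TT}, which already furnishes the higher-order bound
\[
\sup_{0\leq\tt\leq T} E[\phi_{\leq N}](\tt)+\|\phi_{\leq N}\|_{LE^1_w[0,T]}\lesssim \mathcal{E}_N(0)+\|F_{\leq N}\|_{L^1L^2+LE^*_w}
\]
directly, with the trapping commutation handled inside \cite{TT} by methods specific to the Kerr geometry rather than by black-box induction. Only the tangential term $\|\la\tt-\rs\ra^{-(1+\delta_1)/2}\tpa\phi_{\leq N}\|_{L^2L^2(r\geq R_1)}$ then remains. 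For this the paper applies the scalar estimate (Lemma~4.3 of \cite{LT1}) to each $\phi_\alpha$, picking up $\|[\Box_K,Z^\alpha]\phi\|_{LE^*_w}\lesssim\|F_{\leq|\alpha|}\|_{LE^*_w}+\|\phi_{\leq|\alpha|}\|_{LE^1_w}$ via \eqref{comm}; the last term is now harmless because $\|\phi_{\leq N}\|_{LE^1_w}$ has already been bounded in the previous step, so there is no circularity.
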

\begin{proof}
Indeed, Theorem 4.5 from \cite{TT} gives the desired bound for the first two terms. On the other hand, Lemma 4.3 in \cite{LT1} and Cauchy Schwarz yield
\beq\label{t-r}
\|\la \tt-\rs\ra^{\frac{-1-\delta_1}2} \tpa\phi\|_{L^2[0, T] L^2(r\geq R_1)} \lesssim \|\phi\|_{LE_w^1[0, T]} + \|F\|_{LE^*_w[0, T]},
\eq
which is the desired bound when $N=0$. Moreover, for any multiindex $\alpha$ we have from applying \eqref{t-r} to $\phi_{\alpha}$ that
\[
\|\la \tt-\rs\ra^{\frac{-1-\delta_1}2} \tpa\phi_{\alpha}\|_{L^2[0, T] L^2(r\geq R_1)} \lesssim \|\phi_{\alpha}\|_{LE_w^1[0, T]} + \|F_{\alpha}\|_{LE^*_w[0, T]} + \|[\Box_K, Z^{\alpha}] \phi\|_{LE^*_w[0, T]}. 
\]

We are left with bounding the last term on RHS. By \eqref{comm} we have
\[
\|[\Box_K, Z^{\alpha}] \phi\|_{LE^*_w[0, T]} \lesssim \|F_{\leq |\alpha|}\|_{LE^*_w[0, T]} + \|r^{-2+}\pa \phi_{\leq |\alpha|}\|_{LE^*_w[0, T]} \lesssim \|F_{\leq |\alpha|}\|_{LE^*_w[0, T]} + \|\phi_{\leq |\alpha|}\|_{LE_w^1[0, T]}.
\]

This finishes the proof of the lemma.
\end{proof}

The first estimate of this kind was obtained by Morawetz for the Klein-Gordon equation \cite{M}. In the Schwarzschild case, similar estimates were shown in \cite{BS1, BS2}, \cite{BSter}, \cite{DR1}, \cite{DR2}, \cite{MMTT}. The estimate for Kerr with small angular momentum was proven in \cite{TT} (see also \cite{AB} and \cite{DaRoNotes} for related works). For large angular momentum see \cite{DRS16} ($|a|<M$), and \cite{Ar} ($|a|=M$).

\section{Pointwise estimates from local energy decay estimates}\label{sec:weakdecay}

The goal of this section is to show how to extract (weak) pointwise estimates from local energy norms. These bounds will serve as the starting point in an iteration that will yield strong enough pointwise bounds to close the bootstrap argument in Section 7.

Let
 \[
C_{T} = \{ T \leq \tt \leq 2T, \ \ \rs \leq \tt\}.
\]

 We use a double dyadic decomposition of $C_T$
with respect to either the size of $\tt-\rs$ or the size of $r$, depending
on whether we are close or far from the cone,
\[
C_{T} ={\bigcup}_{1\leq  R \leq T/4}  C_{T}^{R} \, \,\,\bigcup\,\,\, {\bigcup}_{1\leq  U < T/4} C_T^{U},
\]
where for $R,U > 1$ we set
\[
 C_{T}^{R} = C_T \cap \{ R < r < 2R \},
\qquad
C_{T}^{U} = C_T \cap \{ U < \tt-\rs < 2U\},
\]
while for $R=1$ and $U= 1$ we have
\[
 C_{T}^{R=1} = C_T \cap \{ 0 < r < 2 \},
\qquad
C_{T}^{U=1} = C_T \cap \{ 0 < \tt-\rs < 2\}.
\]
The sets $C_T^R$ and $C_T^U$ represent the setting in which we apply
Sobolev embeddings, which allow us to obtain pointwise bounds from
$L^2$ bounds. Precisely, we have (see Lemma 3.8 from \cite{MTT} and Lemma 6.2 in \cite{LT1}):

\begin{lemma} \label{l:l2tolinf}
 For any function $w$ and all $T \geq 1$ and $1 \leq R,U \leq T/4$  we have
 \begin{equation}
  \!  \| w\|_{L^\infty(C_T^{R})} \lesssim
\frac{1}{T^{\frac12} R^{\frac32}} \sum_{i\leq 1, j \leq 2}
    \|S^i \Omega^j w\|_{L^2( C_T^{R})} +  \frac{1}{T^{\frac12} R^{\frac12}}
\sum_{i\leq 1, j \leq 2}  \|S^i \Omega^j \pa w\|_{L^2( C_T^{R})},
    \label{l2tolinf-r}\end{equation}
respectively
  \begin{equation}
    \| w\|_{L^\infty(C_T^{U})} \lesssim \frac{1}{T^{\frac32} U^{\frac12}} \sum_{i\leq 1, j \leq 2}
    \|S^i \Omega^j w\|_{L^2( C_T^{U})} +  \frac{U^{\frac12}}{T^{\frac32}}
 \sum_{i\leq 1, j \leq 2} \|S^i \Omega^j \pa w\|_{L^2(C_T^{U})}.
    \label{l2tolinf-u}\end{equation}
\end{lemma}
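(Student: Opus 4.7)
Both estimates are instances of anisotropic Sobolev embedding, obtained by rescaling each dyadic piece to a fixed unit-size region and invoking an $H^s \hookrightarrow L^\infty$ embedding with $s>2$ in four dimensions. I describe the argument for \eqref{l2tolinf-r}; the bound \eqref{l2tolinf-u} is proved in exactly the same way with $u=\tt-\rs$ playing the role of $\rs$ and $U$ the role of $R$.

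The first step is to rescale. I set $s=\tt/T$ and $\rho=\rs/R$ (keeping $\omega$ on the unit sphere). The region $C_T^R$ then becomes the fixed unit-size 4-manifold $[1,2]^2\times S^2$, and the Euclidean volume element transforms as $d\tt\,dr\,r^2\,d\omega\sim TR^3\,ds\,d\rho\,d\omega$, so that $\|\cdot\|_{L^2(C_T^R)}^2\sim TR^3\|\cdot\|_{L^2_{\mathrm{rescaled}}}^2$. In these coordinates the vector fields take the form $S=s\pa_s+\rho\pa_\rho$ (unit-size, since $s,\rho\in[1,2]$), $\Omega$ is a unit-size family of angular derivatives on $S^2$, and $\pa_{\rs}=R^{-1}\pa_\rho$.

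The second step is Sobolev embedding on the unit-size region. Combining $H^{1+\varepsilon}(S^2)\hookrightarrow L^\infty(S^2)$, which accounts for the two $\Omega$-derivatives appearing in the lemma, with iterated one-dimensional $H^1\hookrightarrow L^\infty$ in the $s$- and $\rho$-directions gives
\[
 \|w\|_{L^\infty([1,2]^2\times S^2)}^2 \lesssim \sum_{a,b\leq 1,\ c\leq 2}\|\pa_s^a\pa_\rho^b\Omega^c w\|_{L^2([1,2]^2\times S^2)}^2.
\]

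The third step converts $\pa_s$-derivatives into $S$-derivatives using the identity $s\pa_s=S-\rho\pa_\rho$, valid since $s\geq 1$. Combined with $\pa_\rho=R\pa_{\rs}$, this trades each $\pa_s$ for an $S$ plus a factor $R$ times a spatial $\pa$-derivative. A short commutator computation (using $[\pa_\rho,S]=\pa_\rho$ and $[S,\Omega]$ of order $\Omega$) ensures that the mixed second derivative $\pa_s\pa_\rho\Omega^c w$ is, after rearrangement, a linear combination of $S^i\Omega^c(R\pa_{\rs})^{b'}w$ with $i\leq 1$ and $b'\leq 1$, up to lower-order corrections of the same type. The final step is to restore the $L^2(C_T^R)$-norms: the volume change contributes the overall factor $(TR^3)^{-1/2}$, and each $\pa_\rho=R\pa_{\rs}$ contributes a further $R$. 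Consequently the $b'=0$ family, which contains only $S^{\leq 1}\Omega^{\leq 2}w$, comes with prefactor $T^{-1/2}R^{-3/2}$, while the $b'=1$ family, which contains one extra $\pa$-derivative, comes with prefactor $T^{-1/2}R^{-3/2}\cdot R=T^{-1/2}R^{-1/2}$, exactly as in \eqref{l2tolinf-r}. The identical computation in the coordinates $(s,u,\omega)$ with Jacobian $T^3U$ and with $\pa_u=-U\pa_{\rs}$ yields the corresponding pair of coefficients $T^{-3/2}U^{-1/2}$ and $U^{1/2}T^{-3/2}$ in \eqref{l2tolinf-u}.

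\emph{Main obstacle.} The delicate step is the third one: reducing the mixed iterated-Sobolev term $\pa_s\pa_\rho\Omega^c w$ to a combination that actually fits inside the right-hand side $S^{\leq 1}\Omega^{\leq 2}\pa^{\leq 1}w$, without generating a spurious $S^2$-term or a bare $\pa^2$-term. The identity $s\pa_s=S-\rho\pa_\rho$ together with careful commutator bookkeeping between $S$, $\Omega$ and the radial derivative is exactly what makes this reduction possible, and is where the structure of the scaling field $S$ is crucially used.
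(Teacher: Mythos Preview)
The paper does not prove this lemma but cites \cite{MTT} and \cite{LT1}; your overall strategy---rescale the dyadic piece to unit size and apply a product Sobolev embedding---is the one used there. However, your Step~3, which you correctly flag as the main obstacle, does not close as written. In your coordinates $(s,\rho)=(\tt/T,\rs/R)$ one has $S=s\pa_s+\rho\pa_\rho$, and substituting $s\pa_s=S-\rho\pa_\rho$ into the mixed Sobolev term gives
\[
s\,\pa_s\pa_\rho w \;=\; S\pa_\rho w - \rho\,\pa_\rho^2 w \;=\; \pa_\rho(Sw) - \pa_\rho w - \rho\,\pa_\rho^2 w.
\]
The piece $\rho\pa_\rho^2 w\sim R^2\pa_{\rs}^2 w$ is not removed by any commutator identity; after the Jacobian factor it contributes $T^{-1/2}R^{1/2}\|\pa^2 w\|_{L^2(C_T^R)}$, which is absent from the right-hand side of \eqref{l2tolinf-r}. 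The obstruction is structural: while $\{1,S,\pa_\rho\}$ and $\{1,\pa_s,\pa_\rho\}$ span the same first-order space, the second-order operators $S\pa_\rho$ and $\pa_s\pa_\rho$ differ precisely by a $\pa_\rho^2$ term, so no rearrangement within $S^{\leq 1}\pa_\rho^{\leq 1}$ can recover $\pa_s\pa_\rho$.

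The remedy is to choose coordinates in which $S$ is itself a coordinate vector field, so that the product embedding uses $S$ and the radial derivative as \emph{commuting} directions from the outset. For $C_T^R$ take $\sigma=\log\tt$ and $\nu=(\rs/\tt)(T/R)$: then $S=\pa_\sigma$, $\pa_\nu=(R\tt/T)\,\pa_{\rs}\sim R\,\pa_{\rs}$, the region becomes a genuine unit box in $(\sigma,\nu,\omega)$ with Jacobian still $\sim TR^3$, and the mixed term is simply $\pa_\sigma\pa_\nu w=\pa_\nu(Sw)\sim R\,\pa_{\rs}(Sw)$, which lands in $S^{\leq 1}\Omega^{\leq 2}\pa w$ with the correct coefficient $T^{-1/2}R^{-1/2}$. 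For $C_T^U$ use $\nu=((\tt-\rs)/\tt)(T/U)$ instead, giving Jacobian $\sim T^3U$ and the coefficients of \eqref{l2tolinf-u}. Equivalently, one may keep your $(s,\rho)$ coordinates but perform the first one-dimensional Sobolev along the integral curves of $S$ (on which $S$ is the unit tangent) and only then transversally in $\rho$; since $\pa_\rho(Sw)=S\pa_\rho w+\pa_\rho w$, the transverse derivative of $Sw$ stays inside $S^{\leq 1}\pa_\rho^{\leq 1}$ and the argument closes.
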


Using the lemma above, we prove the following pointwise bound:
\begin{equation}\label{ptdecayu}
\|w\|_{L^{\infty}(C_T)} \lesssim \la \tt\ra^{-1} \la \tt-\rs\ra^{1/2} \|w_{\leq 12}\|_{LE^1[T, 2T]}.
\end{equation}

Indeed, in the region $C_T^{R}$, this is an immediate application of \eqref{l2tolinf-r}. On the other hand, in the region $C_T^{U}$ this follows from \eqref{l2tolinf-u} and Hardy's inequality, see (6.7) in \cite{LT1}.

We also need an $L^{\infty}$ bound on the derivative that is better than \eqref{ptdecayu} for large $r$. This is the content of the following, which is essentially Proposition 3.5 in \cite{LooiT}
\begin{proposition}\label{DerProp}
Let
$$\mu := \min(\la \tt\ra,\la \tt-\rs\ra)^{1/2}.$$
Assume that $\phi$ solve \eqref{eq:Einsteinsemilinearmodel} for $t\in [T, 2T]$. Then for any dyadic region $C\in\{C_T^R, C_U^R\}$ and $m\geq 0$ we have 
\begin{equation}\label{deBound}
 \|\pa\phi_{\leq m}\|_{L^\infty(C)} \le \bar C_{m} \frac1\mu \left(\frac1{\la r\ra} + \|\pa\phi_{\le \frac{m+10}2} \|_{L^{\infty}(C)}\right)\|\phi_{\le m+5} \|_{LE^1[T,2T]}
\end{equation}
\end{proposition}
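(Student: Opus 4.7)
The plan is to apply the Sobolev embedding of Lemma \ref{l:l2tolinf} to $w=\pa\phi_{\alpha}$ for each $|\alpha|\le m$; on $C_T^R$ this gives
\[
\|\pa\phi_{\le m}\|_{L^\infty(C_T^R)} \lesssim \frac{1}{\mu R^{3/2}}\,\|\pa\phi_{\le m+3}\|_{L^2(C_T^R)} + \frac{1}{\mu R^{1/2}}\,\|\pa^2\phi_{\le m+3}\|_{L^2(C_T^R)},
\]
with the analogous bound on $C_T^U$ from \eqref{l2tolinf-u}. The first summand is immediately $\lesssim \mu^{-1}R^{-1}\|\phi_{\le m+4}\|_{LE^1[T,2T]}$ via $\|\pa u\|_{L^2(C)}\lesssim R^{1/2}\|\pa u\|_{LE[T,2T]}$ and the commutator bound \eqref{comm}; this already accounts for the ``$1/\la r\ra$'' contribution to \eqref{deBound}.

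The heart of the argument is the second-derivative estimate
\[
\|\pa^2\phi_{\le m+3}\|_{L^2(C)}\lesssim R^{-1/2}\|\phi_{\le m+5}\|_{LE^1}+R^{1/2}\|\pa\phi_{\le(m+10)/2}\|_{L^\infty(C)}\,\|\phi_{\le m+5}\|_{LE^1}.
\]
To prove it, set $\psi=\phi_\alpha$ with $|\alpha|\le m+3$ and express $\pa^2\psi$ algebraically in terms of $S\pa\psi$ and $\Box_K\psi$. Applying $S=\tt\pa_\tt+\tilde r\pa_{\tilde r}$ to $\pa_\tt\psi$ and $\pa_{\tilde r}\psi$ yields two linear relations in the three unknowns $\pa_\tt^2\psi$, $\pa_\tt\pa_{\tilde r}\psi$, $\pa_{\tilde r}^2\psi$; a third comes from rewriting the wave equation as $\pa_\tt^2\psi-\pa_{\tilde r}^2\psi=\tfrac{2}{\tilde r}\pa_{\tilde r}\psi+\tilde r^{-2}\Delta_\omega\psi-\Box_K\psi+\mathrm{LO}$, where the Kerr perturbation contributes errors in $S^Z(\tilde r^{-2+})$. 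Inverting the $3\times 3$ system, whose determinant is $\tt^2-\tilde r^2$, yields
\[
\pa^2\psi \;=\; \frac{O(\tt,\tilde r)}{\tt^2-\tilde r^2}\,S\pa\psi \;+\; \frac{O(\tilde r^2,\tt^2)}{\tt^2-\tilde r^2}\Bigl(\tfrac1{\tilde r}\pa\psi+\tfrac{1}{\tilde r^{2}}\phi_{\le 2}+\Box_K\psi\Bigr)+\mathrm{LO}.
\]
The coefficient $\tilde r^2/(\tt^2-\tilde r^2)$ is of size $(R/T)^2\le 1$ on $C_T^R$, but blows up like $T/U$ near the light cone; that loss is precisely compensated by the small Sobolev weight $T^{-3/2}U^{1/2}$ in \eqref{l2tolinf-u}.

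Taking $L^2(C)$ norms, estimating the linear terms by $\|\pa\psi_{\le 1}\|_{L^2(C)}\lesssim R^{1/2}\|\phi_{\le m+5}\|_{LE^1}$ and $\|\la r\ra^{-2}\phi_{\le 2}\|_{L^2(C)}\lesssim R^{-1/2}\|\phi_{\le m+5}\|_{LE^1}$, and commuting with $Z^{\le m+3}$ via \eqref{comm}, one is reduced to bounding $\|\Box_K\phi_{\le m+3}\|_{L^2(C)}$. Since the nonlinearities satisfy \eqref{Pass}--\eqref{Qass}, placing the factor with fewer vector fields in $L^\infty$ (which has at most $(m+3)/2\le (m+10)/2$ derivatives) gives
\[
\|\Box_K\phi_{\le m+3}\|_{L^2(C)}\lesssim\|\pa\phi_{\le(m+10)/2}\|_{L^\infty(C)}\cdot R^{1/2}\|\phi_{\le m+5}\|_{LE^1},
\]
and multiplication by the Sobolev prefactor $\mu^{-1}R^{-1/2}$ reproduces the two terms of \eqref{deBound}. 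The principal obstacle is the algebraic decomposition of $\pa^2\psi$: the determinant $\tt^2-\tilde r^2$ degenerates exactly on the light cone, so the singular factor must be balanced against the Sobolev gains in the two geometric regimes $C_T^R$ (where $\tilde r\ll\tt$) and $C_T^U$ (where $\tt-\tilde r\ll \tilde r$). A secondary issue is the treatment of the $S^Z(\tilde r^{-2+})$ Kerr corrections in \eqref{comm}, which contribute terms of order $\la r\ra^{-2+}\pa\phi_{\le m+4}$ that are absorbed into the $R^{-1/2}\|\phi\|_{LE^1}$ part.
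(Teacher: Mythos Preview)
Your approach is correct and essentially the same as the paper's: apply the Sobolev embedding of Lemma~\ref{l:l2tolinf} to $\pa\phi_\Lambda$, control the resulting $\pa^2$ term via a Klainerman--Sideris type pointwise bound, and estimate $\Box_K\phi$ using the quadratic structure of the nonlinearity. The only difference is that the paper invokes the Klainerman--Sideris estimate as a black box (Lemma~\ref{KS1}, cited from \cite{LT1,LT2}), whereas you re-derive it inline by inverting the $3\times 3$ linear system in $(\pa_\tt^2\psi,\pa_\tt\pa_{\tilde r}\psi,\pa_{\tilde r}^2\psi)$ with determinant $\tt^2-\tilde r^2$; this is exactly how that lemma is proved.
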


Here the crucial estimate was the following Klainerman-Sideris type estimate,
see Lemma 6.3 in \cite{LT1} (for Schwarzschild) combined with the remarks after (5.13) in \cite{LT2}:

\begin{lemma}\label{KS1}  For any $w$ we have in the region $r\geq 2R_1$ that
\begin{equation}\label{improvetwoderivs}
|\partial^2 w| \lesssim \frac{\tt}{r\la t-\rs\ra} \sum_{i+ j \leq 1} |\pa S^i \Omega^j w| + \frac{t}{\la t-\rs\ra} |\Box_K w|
\end{equation}
\end{lemma}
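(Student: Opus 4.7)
My plan is a standard Klainerman--Sideris commutator argument, adapted to two features of the present setting: the absence of boost vector fields (responsible for the extra $\tt/r$ factor relative to the classical bound) and the Kerr background (for which $r \geq 2R_1$ ensures the perturbation from Minkowski is small enough to be absorbed).

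First, I would reduce the problem to bounding the three null-frame second derivatives $L^2 w$, $L\uL w$, $\uL^2 w$, where $L = \pa_{\tt} + \pa_{\rs}$. Using $\pa_j = \omega_j \pa_{\rs} + \ang_j$ with $|\ang_j w| \lesssim r^{-1}|\Omega w|$, every Cartesian second derivative $\pa_\mu \pa_\nu w$ splits into a combination of these plus angular contributions of the form $r^{-1}|\pa \Omega w| + r^{-2}|\Omega^2 w| + r^{-1}|\pa w|$, all dominated by the target right-hand side once one invokes $|\Omega^2 w| \lesssim r |\pa \Omega w|$.

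The heart of the argument is the bound on $\uL^2 w$. From $S = \tfrac{\tt+\rs}{2} L + \tfrac{\tt-\rs}{2}\uL$ I extract the key identity $(\tt-\rs)\uL = 2S - (\tt+\rs)L$. Applied as an operator to $\uL w$, and using $[S,\uL] = -\uL$,
\[
(\tt-\rs)\uL^2 w = 2\uL Sw - 2\uL w - (\tt+\rs)\, L\uL w.
\]
For $L\uL w$, the wave operator in the far region takes the form $\Box_K = -L\uL + \tfrac{2}{r}\pa_{\rs} + \tfrac{1}{r^2}\Delta_\omega + E$, with Kerr correction $|Ew| \lesssim \tfrac{M}{r}|\pa^2 w| + \tfrac{1}{r^2}|\pa w|$. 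Rearranging gives $|L\uL w| \lesssim |\Box_K w| + \tfrac{1}{r}|\pa w| + \tfrac{1}{r^2}|\Omega^2 w| + \tfrac{M}{r}|\pa^2 w|$. Substituting, bounding $\tt + \rs \lesssim \tt$ in the region $\rs \lesssim \tt$ (the complementary region $\rs \gg \tt$ has $\la \tt-\rs\ra \gtrsim r$ and is handled directly from the wave equation), and multiplying through by $r$ yields
\[
r \la \tt-\rs\ra\, |\uL^2 w| \lesssim \tt \sum_{i+j \leq 1} |\pa S^i \Omega^j w| + \tt\, r\, |\Box_K w| + M \tt\, |\pa^2 w|.
\]

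The two other null-frame components are easier: $L^2 w$ is controlled by iterating the tangential estimate \eqref{tgimp} on $v = Lw$, producing terms of the same form; and $L\uL w$ is controlled directly from the wave-equation bound. Summing and choosing $R_1$ large enough that $M$ is small relative to the implicit constant lets the $M\tt|\pa^2 w|$ term be absorbed into the left-hand side, yielding the claimed inequality. The main obstacle is precisely this circular appearance of $|\pa^2 w|$ on the right-hand side, arising from the Kerr correction $E$ to the Minkowski wave operator; the hypothesis $r \geq 2R_1$ is engineered so that $M/r$ is sufficiently small for absorption to succeed.
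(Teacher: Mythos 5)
Your skeleton — decomposing into the null-frame second derivatives, deriving $(\tt-\rs)\uL = 2S - (\tt+\rs)L$, using the wave operator for $L\uL w$, and absorbing the background corrections by restricting to $r\geq 2R_1$ — is indeed the standard Klainerman--Sideris argument that underlies the cited references (Lemma 6.3 of \cite{LT1}, remarks after (5.13) of \cite{LT2}); the present paper does not reprove the lemma, it only cites it. The identity and the commutator $[S,\uL]=-\uL$ are correct.

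The gap is in your treatment of the curved-background correction $E$, where you assert $|Ew|\lesssim \tfrac{M}{r}|\pa^2 w|+\tfrac{1}{r^2}|\pa w|$ and then claim this can be absorbed. This estimate is too crude, and with it the absorption does \emph{not} close. After your substitution and multiplication by $r$, the troublesome term is $\tfrac{M(\tt+\rs)}{r}|\pa^2 w| \sim M\tt|\pa^2 w|$, which you wish to absorb into $r\la\tt-\rs\ra|\pa^2 w|$; that requires $M\tt \ll r\la\tt-\rs\ra$. Near the light cone, where $r\sim\tt$ and $\la\tt-\rs\ra\sim 1$, this reduces to $M\ll 1$, which is not assumed — the black hole mass $M$ is a fixed parameter. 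Your remedy, ``choosing $R_1$ large enough that $M$ is small relative to the implicit constant,'' is not coherent: $R_1$ does not change $M$, and increasing $R_1$ does not make $M\tt/(r\la\tt-\rs\ra)$ uniformly small since the $\tt/\la\tt-\rs\ra$ weight blows up on the cone exactly where $M/r$ stops helping.

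The resolution, and the reason the references succeed, is that the $O(M/r)$ piece of $\Box_K-\Box$ does not act on a generic second derivative. For the Schwarzschild part one has, in the coordinates of the paper, $(1-2M/r)\Box_S = -L\uL + \tfrac{2}{r}(1-2M/r)\pa_{\rs} + \tfrac{1-2M/r}{r^2}\Delta_\omega$, so the $(1-2M/r)^{-1}$ factor multiplies $-L\uL$ \emph{only}: solving for $L\uL w$ gives $|L\uL w|\lesssim |\Box_K w| + \tfrac1r|\pa w| + \tfrac1{r^2}|\Omega^2 w|$ with \emph{no} $M/r\,|\pa^2 w|$ error, since the overall $(1-2M/r)$ is just a bounded coefficient. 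The genuine $|\pa^2 w|$ error comes from the Kerr deviation from Schwarzschild, which by \eqref{KSdiff} is $S^Z(r^{-2})$ in the metric coefficients (see also the conjugated operator $P$ of Section 6, whose short-range part $P_{sr}$ has $g_{sr}^{\alpha\beta}\in S^Z(r^{-2})$). So $|Ew|\lesssim \tfrac{1}{r^2}|\pa^2 w| + \tfrac1{r^2}|\pa w|$, the absorption coefficient becomes $\tfrac{\tt}{r^2\la\tt-\rs\ra}\lesssim\tfrac{1}{r}$, and taking $R_1$ large \emph{does} close it. You should also be careful about the passage from $|\tt-\rs|$ (which is what the $S$-identity actually produces on the left) to $\la\tt-\rs\ra$ in the final display; on the shell $|\tt-\rs|\le 1$ this is a genuine strengthening that needs separate justification.
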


We now apply \eqref{l2tolinf-u} to $\pa \phi_{\Lambda}$. We obtain
\beq\label{dervcone} \begin{split}
\| \partial \phi_{\Lambda}\|_{L^\infty(C_T^{U})} \lesssim & \frac{1}{T^{\frac32} U^{\frac12}} \sum_{i\leq 1, j \leq 2}
    \|S^i \Omega^j \partial \phi_{\Lambda}\|_{L^2( C_T^{U})} +  \frac{U^{\frac12}}{T^{\frac32}}
\sum_{i\leq 1, j \leq 2}     \|S^i \Omega^j \partial^2 \phi_{\Lambda}\|_{L^2(C_T^{U})} \\ & \lesssim \frac{1}{TU^{\frac12}} \|\phi_{\leq |\Lambda|+13}\|_{LE^1[T, 2T]} +
 \frac{1}{(TU)^{\frac12}}  \|(\Box_K \phi)_{\leq |\Lambda|+10}\|_{L^2( C_T^{U})}
 \end{split}
\eq

Since
\[
|(\Box_K \phi)_{\leq |\Lambda|+10}| \lesssim |\pa \phi_{\leq \frac{|\Lambda|}2 + 5}| |\pa\phi_{\leq |\Lambda|+10}|
\]
the conclusion follows in the region $C_T^U$. A similar computation yields the result in $C_T^R$.

\section{Improved pointwise bounds}\label{sec:improvedbounds}

We will use three lemmas that will help us improve our pointwise bounds. The first one is Proposition 3.14 from \cite{MTT}, which will allow us to turn $r$-decay into $t$-decay in the region $r\leq t/2$.

\begin{lemma} \label{lv:smallr}
   The the following estimate holds for all $m\geq 0$ and some fixed ($m$-independent) $n$:
  \begin{equation}
    \| u_{\leq m} \|_{LE^1(C_{T}^{<T/2})} \lesssim
T^{-1} \| \la r \ra  u_{\leq m+n}\|_{LE^1( C_{T}^{<T/2})}
+   \|(\Box_K u)_{\leq m+n} \|_{LE^*( C_{T}^{<T/2})}.
    \label{smallr:main}
  \end{equation}
\end{lemma}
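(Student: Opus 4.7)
The plan rests on the scaling vector field identity
\[
\pa_t u = t^{-1}(Su - \tilde r \pa_{\tilde r} u),
\]
which in the region $C_T^{<T/2}$ (where $t\sim T$ and $r\leq T/2$) yields $|\pa_t u| \lesssim T^{-1}|Su| + (r/T)|\pa u|$. The factor $r/T \leq 1/2$ makes the second summand absorbable into the LHS once we take the $LE$ norm on a dyadic annulus $A_R$ with $R\leq T/2$. The first summand supplies the desired $T^{-1}$ gain: since $Su$ appears among $u_{\leq 3}$, one has $\|Su\|_{LE(A_R)} \lesssim R\|u_{\leq 3}\|_{LE^1(A_R)} \lesssim \|\la r\ra u_{\leq 3}\|_{LE^1(A_R)}$, so $T^{-1}\|Su\|_{LE(A_R)} \lesssim T^{-1}\|\la r\ra u_{\leq 3}\|_{LE^1(A_R)}$, as required.

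For the spatial derivatives $\pa_i u$, the analogous rearrangement $\pa_{\tilde r} u = \tilde r^{-1}(Su - t\pa_t u)$ carries a dangerous $\tilde r^{-1}$ factor in the small-$r$ regime, so the argument must invoke the wave equation $\Box_K u = F$. One solves for the second radial derivative $\pa_{\tilde r}^2 u$ in terms of $\pa_t^2 u$, mixed tangential derivatives and $F$ (the Kerr metric being a small perturbation of the flat case for $a \ll M$); iterating the scaling identity once more -- using $[S,\pa_t]=-\pa_t$ so that the $T^{-1}$ gain cascades onto the second time derivative -- yields control of $\pa^2 u$ in $L^2(A_R)$ by a weighted norm of $u_{\leq n}$ together with $F$-contributions. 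A Poincar\'e-type inequality on $A_R$ then passes from $\pa^2 u$ to $\pa u$, completing the bound on $\|\pa u\|_{LE(C_T^{<T/2})}$. The $\la r\ra^{-1} u$ component of the $LE^1$ norm is then controlled by a Hardy inequality against the bound on $\pa u$ just obtained.

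The extension to $m\geq 1$ proceeds by commuting $Z^\alpha$ past $\Box_K$ via \eqref{comm}: the commutator decomposes into a term absorbed into $\|(\Box_K u)_{\leq m+n}\|_{LE^*}$ and a term carrying a weight $r^{-2+}\pa u_{\leq m}$ that is lower order and handled by the $m=0$ case inductively. The main obstacle is the spatial-derivative step: because the scaling vector field alone cannot control $\pa_r u$ for small $r$, one must use the wave equation together with elliptic regularity, which forces careful tracking of how many vector fields are spent converting spatial second derivatives back to first derivatives. This bookkeeping is exactly what necessitates the uniform but unspecified, $m$-independent constant $n$ in the statement.
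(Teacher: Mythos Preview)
The paper does not prove this lemma; it is quoted directly as Proposition~3.14 of \cite{MTT}. The argument there is not a derivative-by-derivative bootstrap but applies a temporal cutoff $\chi(\tt/T)$ together with the full local energy estimate for $\Box_K$: the commutator $[\Box_K,\chi]u$ carries an explicit $T^{-1}$ factor, and the local energy machinery controls all pieces of the $LE^1$ norm---spatial and temporal derivatives as well as the weighted zero-order term $\la r\ra^{-1}u$---simultaneously through the positivity of the multiplier, rather than one component at a time.

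Your scheme correctly identifies the scaling relation $\pa_{\tt} = \tt^{-1}(S - \tilde r\,\pa_{\tilde r})$ as the source of the $T^{-1}$ gain, but the treatment of the spatial derivatives has a genuine gap. The step ``a Poincar\'e-type inequality on $A_R$ then passes from $\pa^2 u$ to $\pa u$'' does not work as stated: Poincar\'e on an annulus controls only $\pa u$ minus its average, and you supply no independent bound on that average, nor a boundary where $\pa u$ is known. This is not cosmetic, since the entire spatial-derivative estimate rests on it; moreover the earlier absorption of $(r/T)|\pa_{\tilde r} u|$ into the left side when bounding $\pa_{\tt} u$ is circular until $\pa_{\tilde r} u$ is itself under control, so the two steps cannot be decoupled in the way you propose. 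The closing Hardy step for $\la r\ra^{-1}u$ has the same defect once restricted to a single dyadic shell. Finally, the remark that Kerr is ``a small perturbation of the flat case for $a\ll M$'' is not correct: small $a$ makes Kerr close to Schwarzschild, which is far from flat near $r\sim M$, and $C_T^{<T/2}$ contains that region; solving for $\pa_{\tilde r}^2 u$ from the equation there requires the full Kerr principal part, not the Minkowski one.
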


The second lemma is a slight modification of Lemma 3.11 from \cite{MTT}, the difference being that we may not enlarge our regions in time. The role of the lemma is to gain a factor of $\frac{\tt}{r\la \tt-\rs\ra}$ for the derivative.

We let $\tilde C_{T}^R$ and $\tilde C_{T}^U$ denote enlargements of $C_{T}^R$ and $C_{T}^U$ in space (but not in time) that contain all the integral curves of the scaling vector field $S$ (i.e. if $(t,x)\in C_T^{R}$ then $(st,sx)\in \tilde C_{T}^R$ as long as $T\leq st\leq 2T$ and similarly for $C_T^{U}$). More precisely, let
\[
\tilde C_{T}^R = \{T\leq \tt\leq 2T, \quad \frac{8}{10} \frac{T}{2R} \leq \frac{\tt}{\rs} \leq \frac{12}{10} \frac{2T}{R}\}, \quad \tilde C_{T}^R (\tau) = \tilde C_{T}^R \cap \{\tt=\tau\}.
\]
\[
\tilde C_{T}^U = \{T\leq \tt\leq 2T, \quad \frac{8}{10} \frac{T}{T-2U} \leq \frac{\tt}{\rs} \leq \frac{12}{10} \frac{2T}{2T-U}\}, \quad \tilde C_{T}^U (\tau) = \tilde C_{T}^U \cap \{\tt=\tau\}
\]

An important observation here is that $\rs\approx R$ and $\tt-\rs \approx U$ in $\tilde C_{T}^R$ and $\tilde C_{T}^U$ respectively.

\begin{lemma}\label{lem:L2derL2}
For $1 \ll U,R \leq T/4$ we have
\begin{equation}
  \| \pa w\|_{L^2(C_{T}^{R})} \lesssim R^{-1}
 \| w\|_{L^2(\tilde C_{T}^R)}
  + T^{-1}\big( \|S w \|_{L^2(\tilde C_{T}^R)}+ \|S^2 w \|_{L^2(\tilde C_{T}^R)}\big)
  + R \| \Box_K w\|_{L^2(\tilde C_{T}^R)}
\label{ctrdu}\end{equation}
respectively
\begin{equation}
  \| \pa w\|_{L^2(C_{T}^U)} \lesssim U^{-1}
 \big(\| w\|_{L^2(\tilde C_{T}^U)}
  + \|S w\|_{L^2(\tilde C_{T}^U)}+ \|S^2 w\|_{L^2(\tilde C_{T}^U)}\big)
  + T \| \Box_K w\|_{L^2(\tilde C_{T}^U)}
\label{ctudu}\end{equation} \label{l2du}
\end{lemma}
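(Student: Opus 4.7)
The plan is to follow the scheme of Lemma 3.11 in \cite{MTT}, adapting it to the constraint that we may not enlarge in time. Both estimates \eqref{ctrdu} and \eqref{ctudu} have the same structure: an Agmon-type integration by parts in the small direction, combined with the algebraic identity $S = \tt\pa_\tt + \tilde r\pa_{\tilde r}$ and the Kerr wave equation to control the resulting second-order terms. I describe the argument for \eqref{ctrdu}; the changes for \eqref{ctudu} are indicated at the end.

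First fix a smooth spatial cutoff $\chi = \chi(\tilde r)$ with $\chi\equiv 1$ on $[R,2R]$, supported so that $\{T\leq\tt\leq 2T\}\cap\supp\chi \subset \tilde C_T^R$, with $|\chi'|\lesssim R^{-1}$. The identity $\pa_\tt w = \tt^{-1}(Sw - \tilde r\,\pa_{\tilde r} w)$, together with $\tilde r/\tt \leq 5/8<1$ on $\tilde C_T^R$, reduces the time derivative to
\[
\|\chi\pa_\tt w\|_{L^2}^2 \lesssim T^{-2}\|\chi Sw\|_{L^2}^2 + \|\chi\pa_{\tilde r} w\|_{L^2}^2.
\]
For the radial derivative I integrate by parts in $\tilde r$ against $\chi^2$ and apply AM-GM to the cross term, yielding
\[
\|\chi\pa_{\tilde r} w\|_{L^2}^2 \lesssim R^{-2}\|w\|_{L^2(\tilde C_T^R)}^2 + R^2\|\chi\,\pa_{\tilde r}^2 w\|_{L^2}^2.
\]
The Kerr wave equation, written in $(\tt,\tilde r,\omega)$ coordinates, expresses $\pa_{\tilde r}^2 w$ as a bounded combination of $\Box_K w$, $\pa_\tt^2 w$, $\pa_\tt\pa_{\tilde r} w$, $r^{-2}\Delta_{S^2}w$, and first-order lower-order terms with $O(r^{-1})$ coefficients (including the Kerr corrections). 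The $\pa_\tt^2 w$ piece is controlled by the second-order scaling identity
\[
S^2 w = Sw + \tt^2\pa_\tt^2 w + 2\tt\tilde r\,\pa_\tt\pa_{\tilde r} w + \tilde r^2\pa_{\tilde r}^2 w,
\]
which gives $|\pa_\tt^2 w|\lesssim T^{-2}(|S^2 w|+|Sw|)$ modulo small multiples of $\pa_\tt\pa_{\tilde r} w$ and $\pa_{\tilde r}^2 w$; the mixed derivative is re-expressed using $[\pa_{\tilde r},S]=\pa_{\tilde r}$; and the spherical Laplacian is absorbed via one integration by parts on $S^2$, producing a $\|w\|$ term and further terms already controlled.

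The main technical obstacle is the book-keeping: $\pa_{\tilde r}^2 w$, $\pa_\tt^2 w$ and $\pa_\tt\pa_{\tilde r}w$ are entangled through the wave equation and the $S^2$ identity, so one has to iterate substitutions and AM-GM-absorb using the gap $R/T\leq 1/4$ to get everything onto the correct side. The only genuine novelty relative to \cite{MTT} is the absence of time enlargement: no time cutoff is available, so integration by parts in $\tt$ would produce uncontrolled traces at $\tt=T,2T$. I avoid this entirely by never integrating by parts in the time direction and handling $\pa_\tt$ only algebraically through $S$. For \eqref{ctudu} the argument is identical in spirit: $\chi$ is placed in $\tt-\tilde r$ at scale $U$; the relation $\tilde r\approx\tt\approx T$ in $\tilde C_T^U$ gives $S \approx \tt L$, so $|Lw|\lesssim T^{-1}|Sw|$ replaces the estimate for $\pa_\tt w$, and $\underline{L}$ plays the role previously played by $\pa_{\tilde r}$. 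The factor $T$ rather than $U$ on $\|\Box_K w\|$ reflects that the direction conjugate to $\underline{L}$ through the wave operator is $L$, along which $C_T^U$ has extent $\sim T$.
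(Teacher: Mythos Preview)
Your approach differs substantially from the paper's, and as written it has gaps that go beyond bookkeeping.

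The paper does \emph{not} avoid integrating by parts in time. It integrates $\beta\,\Box_K w^2/2$ over $[T,2T]$ with a \emph{scale-invariant} cutoff $\beta=\chi(\tfrac{\rs}{\tt}\tfrac{T}{R})$ (respectively $\chi(\tfrac{\tt-\rs}{\tt}\tfrac{T}{U})$), obtaining the Lagrangian $\int\beta(|\pa_x w|^2-|\pa_t w|^2)$ together with boundary terms at $\tt=T,2T$. The new point relative to \cite{MTT} is precisely the handling of those boundary terms: after writing $w_t=\tt^{-1}(Sw-x^i\pa_i w)$ and one spatial integration by parts they reduce to fixed-time $L^2$ norms of $w$ and $Sw$, which are then converted back to spacetime norms on $\tilde C_T^R$ by the fundamental theorem of calculus along the $S$-flow, $w(2T,x)^2=\int_{1/2}^1 S(\chi w^2)(2sT,sx)\,ds/s$. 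This is why $\beta$ is chosen $S$-invariant and why $\tilde C_T^R$ is defined to contain those flowlines. The Lagrangian is turned into $|\pa w|^2$ via the pointwise inequality $|\pa w|^2\le M(\tt-\rs)^{-2}|Sw|^2+\tfrac{\tt}{\tt-\rs}(|\pa_x w|^2-|\pa_t w|^2)$, which you do not use at all.

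Your scheme has two concrete problems. For \eqref{ctudu}, the direction you integrate by parts in is $\underline{L}=\pa_{\tt}-\pa_{\tilde r}$, which has a $\pa_\tt$ component; on the slab $T\le\tt\le 2T$ with cutoff $\chi(\tt-\tilde r)$ this produces traces at $\tt=T,2T$ just as surely as a direct time integration by parts would, so the claim that you ``never integrate by parts in the time direction'' fails in exactly the case where it matters. For \eqref{ctrdu}, passing to $R^2\|\chi\,\pa_{\tilde r}^2 w\|_{L^2}^2$ and invoking the wave equation forces $R^{-2}\|\Delta_\omega w\|_{L^2}^2$ onto the right-hand side---two angular derivatives, not controlled by $\|w\|,\|Sw\|,\|S^2 w\|,\|\Box_K w\|$; one integration by parts on the sphere cannot repair this once you have squared the norm. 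If instead you keep the bilinear form $-\int\chi^2 w\,\pa_{\tilde r}^2 w$, the angular piece acquires a good sign, but then $-\int\chi^2 w\,\pa_\tt^2 w$, expanded through the $S^2$ identity and your commutator relation, produces first-derivative terms living on $\supp\chi'$ (where $\chi$ need not be positive) and ultimately $\|\pa_{\tilde r} Sw\|$; closing this without $\|\Box_K Sw\|$ requires a nested-cutoff iteration that the one-line ``re-express via $[\pa_{\tilde r},S]=\pa_{\tilde r}$'' does not supply.
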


\begin{proof}
The proof is similar to the one in Lemma 3.11 from \cite{MTT}, except that we need to estimate the boundary terms at $\tt=T$ and $\tt=2T$.

To keep the ideas clear we first prove the lemma with $\Box_K$ replaced by $\Box$.
We consider a cutoff function $\chi$ supported in
$[8/20, 22/10]$ which equals $1$ on $[9/20, 21/20]$. Let
$$
\beta(\tt, \rs)=\chi\Big( \frac{\rs}{\tt} \frac{T}{R}\Big)
$$
Note that $\beta\equiv 1$ on $C_T^R$, and that $\beta$ is supported in $\tilde C_{T}^R$.

Integrating $\beta \,\Box w^2\!/2=\beta \big(w\Box w+ m^{\alpha\beta} \pa_\alpha w\,\pa_\beta w\big)$ by  parts twice gives
\begin{equation}
\int_{T}^{2T} \int \beta( |\pa_x w|^2 - |\partial_t w|^2) dx \,dt
=\int_{T}^{2T} \int \Box w \cdot \beta w dx\, dt -\frac12  \int_{T}^{2T}\int (\Box \beta) w^2 dx\, dt
-\int\big(\beta w \partial_t w-\beta_t w^2\!/2\big)\,  dx \Big|_{T}^{2T}.
\label{inp}\end{equation}
Since we can write $w_t=(Sw-x^i\pa_i w)/t$ it follows after integration by parts that
\begin{equation}
\int\beta w \partial_t w\,  dx=\frac{1}{\tt}\int\beta w S w\,  dx
+\frac{1}{2 \tt}\int w^2\pa_i (x^i \beta)\,  dx.
\end{equation}
Since $|\pa_i (x^i \beta)|+\tt|\pa_t\beta|\leq C$ on the support of $\beta$, it follows that the boundary terms are bounded by
\begin{equation}
CT^{-1}\Big(  \|w(2T,\cdot)\|^2_{L^2(\tilde C_{T}^R (2T))} + \|S w(2T,\cdot)\|^2_{L^2(\tilde C_{T}^R (2T))}+ \|w(T,\cdot)\|^2_{{L^2(\tilde C_{T}^R (T))}} + \|S w(T,\cdot)\|^2_{L^2(\tilde C_{T}^R (T))}\Big).
\end{equation}

Let $\chi(t/T)$ be another smooth cutoff such that $\chi(2)=1$ and $\chi(1)=0$. We write
\begin{equation}
w(2T,x)^2=\int_{1/2}^1 \frac{d}{ds} (\chi w^2)(s2T,sx)\, ds
= \int_{1/2}^1 S(\chi w^2)(s2T,sx)\, \frac{ds}{s}
=  \int_{T}^{2T} S(\chi w^2)(t,t x/2T)\, \frac{dt}{t}
\end{equation}
and thus
\begin{equation}
\|w(2T,\cdot)\|_{L^2(\tilde C_{T}^R (2T))}^2
\lesssim \frac{1}{T} \| S(\chi w^2)(t,x)\|^2_{L^2(\tilde C_{T}^R)} \lesssim \frac{1}{T}  \Big(\|w \|^2_{L^2(\tilde C_{T}^R)} + \|S w \|^2_{L^2(\tilde C_{T}^R)}\Big).
\end{equation}
 A similar argument holds for $2T$ replaced by $T$, and for $w$ replaced by $S w$. Hence the boundary term can be estimated by
\begin{equation}
 \frac{1}{T^2} \sum_{j=0}^2 \|S^j w \|^2_{L^2(\tilde C_{T}^R)} .
\end{equation}

To estimate $\pa w$ we use the pointwise inequality
\begin{equation}\label{eq:LagrangeEst}
|\pa w|^2 \le M \frac{1}{(\tt-\rs)^{2}} |Sw|^2 + \frac{\tt}{\tt-\rs}
(|\pa_x w|^2 - |\partial_t w|^2)
\end{equation}
which is valid inside the cone $C$ for a fixed large $M$.
Hence
\beq\label{LagrangeEst2}
\int \beta|\pa w|^2 dx dt \lesssim \int \frac{1}{(\tt-\rs)^{2}} \beta |Sw|^2
 +  \frac{\tt}{\tt-\rs}|\Box \beta| w^2  + \frac{\tt}{\tt-\rs}
 \beta |\Box w| |w| dxdt
\eq
where all weights have a fixed size in the support of $\beta$.
The function $\beta$ also satisfies $|\Box \beta| \lesssim R^{-2}$.
Then the conclusion of the lemma follows by applying Cauchy-Schwarz
to the last term.

The argument for $C_{T}^U$ is similar. We now consider
$$
\beta(\tt, \rs)=\chi\Big( \frac{\tt-\rs}{\tt} \frac{T}{U}\Big)
$$

We multiply by $\beta w$ and integrate by parts as above. The boundary terms are now controlled by
\begin{equation}
CU^{-1}\Big(  \|w(2T,\cdot)\|^2_{L^2(\tilde C_{T}^U (2T))} + \|S w(2T,\cdot)\|^2_{L^2(\tilde C_{T}^U (2T))}+ \|w(T,\cdot)\|^2_{{L^2(\tilde C_{T}^U (T))}} + \|S w(T,\cdot)\|^2_{L^2(\tilde C_{T}^U (T))}\Big).
\end{equation}
which in turn is controlled, by using the scaling $S$ as above, by
\begin{equation}
 \frac{1}{TU} \sum_{j=0}^2 \|S^j w \|^2_{L^2(\tilde C_{T}^R)} .
\end{equation}

The estimate now follows from \eqref{LagrangeEst2}, using the fact that $ |\Box \beta| \lesssim T^{-1}U^{-1}$.

Now consider the above proof but with $\Box$ replaced by $\Box_K$.
Integrating $\beta \,\Box_K w^2\!/2=\beta \big(w\Box_K w+ g_K^{\alpha\beta} \pa_\alpha w\,\pa_\beta w\big)$ by  parts twice gives
\begin{equation}
-\int_{T}^{2T}\!\!\! \int \beta g_K^{\alpha\beta} \pa_\alpha w\,\pa_\beta w \sqrt{|g_K|} dx \,dt
=\int_{T}^{2T}\!\!\! \int\big( \beta w \Box_K w -\frac12  (\Box_K \beta) w^2\big) \sqrt{|g_K|} dx \,dt
-\frac{1}{2} \int\big(\beta  g_K^{0\alpha}\pa_\alpha w^2 - g_K^{\alpha 0} w^2\pa_\alpha\beta\big)\, \sqrt{|g_K|}  dx \Big|_{T}^{2T}.
\end{equation}
First we estimate the boundary term. The terms with $\alpha=0$ are handled as before and so is the second term with $\alpha>0$. For the first term with $\alpha>0$ we integrate by parts and see that it is bounded by a term of the same form as the second term plus a term of the form
\begin{equation}
\frac{1}{2} \int \beta \pa_\alpha (g_K^{0\alpha}  \sqrt{|g_K|})\, w^2 \, dx\lesssim \int \beta r^{-2} w^2\, dx ,
\end{equation}
which can be estimated as above. To estimate the interior term we just note that
\begin{equation}
\sqrt{|g_K|} g_K^{\alpha\beta} \pa_\alpha w\,\pa_\beta w
= |\pa_x w|^2 - |\partial_t w|^2+O(r^{-1})|\pa w|^2,
\end{equation}
where the error term can be absorbed in the left of \eqref{eq:LagrangeEst} for large enough $R$.

This finishes the proof of \eqref{ctrdu}. \eqref{ctudu} follows in a similar manner.
\end{proof}

Applying Lemma~\ref{lem:L2derL2} to $w_{\alpha}$ for some multiindex $\alpha$, and using \eqref{comm} we obtain the higher order version of the estimates:
\beq\label{L2Rder}
\|\pa w_{\alpha}\|_{L^2(C_{T}^{R})} \lesssim R^{-1}
 \| w_{|\alpha|+n}\|_{L^2(\tilde C_{T}^R)}
  + R \| (\Box_K w)_{|\alpha|+n}\|_{L^2(\tilde C_{T}^R)}
\eq
\beq\label{L2Uder}
\|\pa w_{\alpha}\|_{L^2(C_{T}^{U})} \lesssim U^{-1}
 \| w_{|\alpha|+n}\|_{L^2(\tilde C_{T}^U)}
  + T \| (\Box_K w)_{|\alpha|+n}\|_{L^2(\tilde C_{T}^U)}
\eq

Combining the two estimates above \eqref{L2Rder} and \eqref{L2Uder} with the Sobolev embeddings from Lemma~\ref{l:l2tolinf} and the pointwise estimate for second order derivatives in  Lemma \ref{KS1} we obtain
\begin{corr} \label{l:LinftyderL2}
 For  all $T \geq 1$ and $1 \leq R,U \leq T/4$  we have for some $n$ independent of $\alpha$:
 \begin{equation}
  \!  \|\pa w_{\alpha}\|_{L^\infty(C_T^{R})} \lesssim
\frac{1}{R}  \|w_{\leq |\alpha|+n}\|_{L^\infty(\tilde  C_T^{R})} + R \|  (\Box_K w)_{|\alpha|+n}\|_{L^\infty(\tilde C_{T}^R)},
    \end{equation}
respectively
\begin{equation}
  \!  \|\pa w_{\alpha}\|_{L^\infty(C_T^{U})} \lesssim
\frac{1}{U} \|w_{\leq |\alpha|+n}\|_{L^\infty(\tilde  C_T^{U})} + T \|  (\Box_K w)_{|\alpha|+n}\|_{L^\infty(\tilde C_{T}^U)}.
    \end{equation}
\end{corr}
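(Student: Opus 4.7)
The plan is to chain together the three ingredients developed immediately above. The Sobolev-type embedding of Lemma~\ref{l:l2tolinf} converts $\|\pa w_\alpha\|_{L^\infty}$ on a dyadic region into the $L^2$ norms of $S^i\Omega^j\pa w_\alpha$ and $S^i\Omega^j\pa^2 w_\alpha$ there. I would commute $S^i\Omega^j$ past $\pa$ by \eqref{hghnull}, then trade each first-derivative $\pa w$ for a factor $R^{-1}$ (resp.~$U^{-1}$) by the higher-order $L^2$ derivative estimate \eqref{L2Rder} (resp.~\eqref{L2Uder}) on the enlarged region $\tilde C_T^R$ (resp.~$\tilde C_T^U$), and dispose of the troublesome second-derivative $L^2$ term by first applying Lemma~\ref{KS1} to extract the gain $\tt/(r\la \tt-\rs\ra)$ and then feeding the resulting $\pa w$ back into \eqref{L2Rder}/\eqref{L2Uder}. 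Finally, the $L^2(\tilde C)$ bounds are converted to $L^\infty(\tilde C)$ by H\"older, using the volumes $|\tilde C_T^R|\approx TR^3$ and $|\tilde C_T^U|\approx T^3U$.

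In the region $C_T^R$ with $R\leq T/4$ one has $\tt\approx T$, $\rs\approx R$, $\la\tt-\rs\ra\approx T$, so Lemma~\ref{KS1} yields the pointwise bound $|\pa^2 w_\alpha|\lesssim R^{-1}|\pa w_{\leq |\alpha|+n}|+|(\Box_K w)_{\leq |\alpha|+n}|$. Plugging the resulting $L^2$ estimates into \eqref{l2tolinf-r} and using $|\tilde C_T^R|\approx TR^3$, the numerology balances:
\[
\frac{1}{T^{1/2}R^{3/2}}\cdot R^{-1}\cdot(TR^3)^{1/2}=R^{-1},\qquad \frac{1}{T^{1/2}R^{3/2}}\cdot R\cdot(TR^3)^{1/2}=R,
\]
and the second-derivative contribution, once absorbed via Lemma~\ref{KS1} and \eqref{L2Rder}, produces exactly the same two factors. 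This delivers the first claimed inequality.

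The $C_T^U$ case is parallel. Here $r\approx T$ and $\la\tt-\rs\ra\approx U$, so Lemma~\ref{KS1} now reads $|\pa^2 w_\alpha|\lesssim U^{-1}|\pa w_{\leq |\alpha|+n}|+(T/U)|(\Box_K w)_{\leq |\alpha|+n}|$. Combining \eqref{l2tolinf-u}, \eqref{L2Uder}, and $|\tilde C_T^U|\approx T^3U$, one computes
\[
\frac{1}{T^{3/2}U^{1/2}}\cdot U^{-1}\cdot(T^3U)^{1/2}=U^{-1},\qquad \frac{1}{T^{3/2}U^{1/2}}\cdot T\cdot(T^3U)^{1/2}=T,
\]
and the second-derivative term contributes at the same level (the extra $T/U$ in Lemma~\ref{KS1} is absorbed by the $U^{1/2}/T^{3/2}$ prefactor in \eqref{l2tolinf-u}). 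This gives the second claimed inequality.

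The main obstacle is bookkeeping: verifying that the number $n$ of extra vector fields is genuinely uniform in $\alpha$. Each invocation of Lemma~\ref{l:l2tolinf}, Lemma~\ref{lem:L2derL2}, or Lemma~\ref{KS1} costs only a fixed, finite number of $S$, $\Omega$, $\pa$ derivatives, and the commutator formula \eqref{comm} guarantees that pulling $Z^\alpha$ through $\Box_K$ produces a $(\Box_K w)_{\leq|\alpha|}$ piece plus lower-order terms that can be absorbed into the scheme. A minor caveat is that \eqref{improvetwoderivs} is stated only for $r\geq 2R_1$, so the range $R\sim 1$ must be treated by a direct Sobolev embedding on a compact set, which is immediate.
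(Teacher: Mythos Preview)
Your proposal is correct and follows essentially the same approach as the paper, which merely states that the corollary is obtained by ``combining the two estimates above \eqref{L2Rder} and \eqref{L2Uder} with the Sobolev embeddings from Lemma~\ref{l:l2tolinf} and the pointwise estimate for second order derivatives in Lemma~\ref{KS1}.'' Your numerology checks out in both regions, and your remark on the trivial $R\sim 1$ case is appropriate; one minor point is that the commutator you need for pushing $S^i\Omega^j$ past $\pa$ is the elementary fact $[Z,\pa]\in S^Z(1)\pa$ rather than \eqref{hghnull}, which concerns $\tpa$.
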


Finally, we will derive a sharp estimate for the bad first order derivative, following
\cite{L90}.
\begin{lemma}\label{lem:transversalder} Let $D_t=\{x;\, 0\leq t-|x| \leq  t/4 \}$,
$C^q_t=\{x;\, t-|x| =q \}$, and let $\overline{w}(q)$ be any positive continuous function,
where $q=t-r$. Suppose that $\Box\phi=F$. Then the following holds in $D_t$, $t\geq 1$:
\begin{multline}\label{eq:transversalder}
t \,|\pa\phi(t,x)\, \overline{w}(q)| \lesssim\!\sup_{4q \leq
\tau\leq t}
\Big(\|\,q  \, \pa \phi(\tau,\cdot)\, \overline{w}\|_{L^\infty(C^q_\tau)}
+{\sum}_{|I|\leq 1} \| Z^I\phi(\tau,\cdot)\, \overline{w}\|_{L^\infty(C^q_\tau)}\Big)\\
+ \int_{4q}^t\Big( \la\tau\ra\|
\,F(\tau,\cdot)\, \overline{w}\|_{L^\infty(C^q_\tau)} +{\sum}_{|I|+|J|\leq 2} \la\tau\ra^{-1}
\| \pa^I \Omega^J \phi(\tau,\cdot)\, \overline{w}\|_{L^\infty(C^q_\tau)}\Big)\, d\tau.
\end{multline}
\end{lemma}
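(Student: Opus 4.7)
\emph{Proof proposal.} The plan is to reduce the bound on the full gradient $\partial\phi$ to a bound on the single ``bad'' transversal derivative $\underline{L}\phi=(\partial_t-\partial_r)\phi$, and then obtain the latter by integrating the wave equation along outgoing null rays of constant $q=t-r$ from the inner boundary of $D_\tau$ (where $\tau-r=\tau/4$, i.e. $\tau=4q$, $r=3q$) up to time $t$. First, decompose $|\partial\phi|\lesssim |L\phi|+|\underline{L}\phi|+|\ang\phi|$ where $L=\partial_t+\partial_r$. The ``good'' pieces are immediately controlled by vector fields: since $L=\frac{2}{t+r}S-\frac{t-r}{t+r}\underline{L}$ one has $t|L\phi|\lesssim |S\phi|+|q\underline{L}\phi|$, and $|\ang\phi|\lesssim r^{-1}|\Omega\phi|$; both fit into the first line of the RHS of \eqref{eq:transversalder} (the $q\partial\phi$ piece absorbs the $q\underline{L}\phi$ contribution and the $Z^I\phi$ piece with $|I|\le 1$ absorbs the rest, noting $r\sim t$ in $D_t$).

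For the main contribution $t\underline{L}\phi\cdot\overline{w}(q)$, use the classical null identity, a direct consequence of $\Box\phi=F$:
\begin{equation}
L\underline{L}(r\phi) = -rF + \tfrac{1}{r}\Delta_\omega\phi.
\end{equation}
Along the ray $\gamma_{q,\omega}:\tau\mapsto(\tau,(\tau-q)\omega)$ one has $L=\frac{d}{d\tau}$, so integrating from $\tau=4q$ to $\tau=t$ and using $\underline{L}(r\phi)=r\underline{L}\phi-\phi$ yields
\begin{equation}
r\,\underline{L}\phi(t,x) = \phi(t,x) + \bigl[3q\,\underline{L}\phi - \phi\bigr](4q,\gamma_{q,\omega}(4q)) + \int_{4q}^{t}\!\Bigl(-\tau F + \tfrac{1}{r}\Delta_\omega\phi\Bigr)\!(\tau,\gamma_{q,\omega}(\tau))\,d\tau.
\end{equation}
Since $\overline{w}(q)$ is constant along $\gamma_{q,\omega}$, multiplying through by $\overline{w}(q)$, using $r\sim\tau\sim t$ in $D_t$, and taking $L^\infty$ along the spheres $C^q_\tau$ turns the boundary term at $\tau=4q$ into the two sup-terms on the RHS of \eqref{eq:transversalder} (the $q\,\underline{L}\phi$ part and the $Z^{I}\phi$ part with $|I|=0$), while the bulk integrand $\tau\|F\overline{w}\|_{L^\infty(C^q_\tau)}$ and $\tau^{-1}\|\Delta_\omega\phi\cdot\overline{w}\|_{L^\infty(C^q_\tau)}\lesssim \tau^{-1}\|\Omega^{\le 2}\phi\cdot\overline{w}\|_{L^\infty(C^q_\tau)}$ give the two integral terms.

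The main obstacle is purely bookkeeping rather than analytic: one must verify that the weight $\overline{w}(q)$ genuinely commutes with the integration (since $q$ is conserved along $\gamma_{q,\omega}$), check that $r\sim\tau$ uniformly in $D_t$ to convert between the $r^{-1}\Delta_\omega$ factor and the stated $\langle\tau\rangle^{-1}\partial^I\Omega^J$ weight with $|I|+|J|\le 2$, and confirm that the starting radius $r=3q$ at $\tau=4q$ gives exactly the $q\cdot\underline{L}\phi$ combination that matches the first sup term. The reduction from $\partial\phi$ to $\underline{L}\phi$ requires care only inasmuch as $L\phi$ must be re-expressed via $S$, but this loses no weight since $|q|\le t/4$ in $D_t$. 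Once these are in place, the estimate \eqref{eq:transversalder} follows by collecting terms and dividing by $r\sim t$.
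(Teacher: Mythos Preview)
Your proposal is correct and follows essentially the same approach as the paper: both arguments rewrite $\Box\phi=F$ as $\partial_v\partial_u(r\phi)=-rF+r^{-1}\Delta_\omega\phi$, integrate along the outgoing null ray of fixed $q$ from the inner boundary $\tau=4q$ (where $r=3q$) to time $t$, exploit that $\overline{w}(q)$ is constant along the ray, and then pass from $\underline{L}\phi$ to the full gradient via $r|\partial\phi|\lesssim r|\underline{L}\phi|+|S\phi|+|\Omega\phi|$. The only cosmetic difference is that you perform the reduction $|\partial\phi|\to|\underline{L}\phi|$ at the outset using $L=\tfrac{2}{t+r}S-\tfrac{t-r}{t+r}\underline{L}$, whereas the paper does it at the end; the content is the same.
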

\begin{proof}

 We write
\begin{equation*}
\Box \phi
= - \frac{1}{r}\,\pa_{v}\pa_{u}(r\phi)
+\frac{1}{r^2}\triangle_\omega \phi,
\end{equation*}
where  $\pa_{u}=\pa_t-\pa_r $ and $\pa_{v}=\pa_{t}+\pa_r$.
Hence in $D_t$
\beq\label{boxest}
\Big|\,\pa_{v}\pa_{u}(r\phi)  \Big|
\lesssim \Bigl|r\Box\phi\Bigr| + {\la r\ra}^{-1}\,\, {\sum}_{|I|+|J|\leq 2} \,\,|\pa^I \Omega^{J} \phi| \lesssim \Bigl|\la t\ra\Box\phi\Bigr| + {\la t\ra}^{-1}\,\, {\sum}_{|I|+|J|\leq 2} \,\,|\pa^I \Omega^{J} \phi|
\eq

Integrating this along the flow lines of the vector field $\pa_{v}$ from the boundary of $D=\cup_{\tau\geq 0}D_\tau$
to any point inside $D_t$ for $t\geq 1$. Using that $\overline{w}$ is constant along the flow lines, and \eqref{boxest}, we obtain
\beq
|\pa_u(r \phi(t,x))\, \overline{w}(q)| \lesssim |\pa_u(r \phi)(4q,3q)\overline{w}(q)| + \int_{4q}^t \Big( \la\tau\ra\|
\,F(\tau,\cdot)\, \overline{w}\|_{L^\infty(C^q_\tau)} +{\sum}_{|I|+|J|\leq 2} \la\tau\ra^{-1}
\| \pa^I\Omega^J \phi(\tau,\cdot)\, \overline{w}\|_{L^\infty(C^q_\tau)}\Big)\, d\tau.
\eq
Moreover
\[
t |\pa_u \phi(t,x) \overline{w}(q)| \lesssim |\pa_u(r \phi(t,x))\, \overline{w}(q)| + |\phi(t,x)\overline{w}(q)|,
\]
and
\[
|\pa_u(r \phi)(4q,3q)\overline{w}(q)| \lesssim |q \pa_u \phi(4q, 3q)\overline{w}(q)| + |\phi(4q, 3q)\overline{w}(q)|.
\]

The last three inequalities yield
\begin{multline}
t |\pa_u \phi(t,x) \overline{w}(q)| \lesssim \!\sup_{4q \leq
\tau\leq t}
\Big(\|\,q  \, \pa \phi(\tau,\cdot)\, \overline{w}\|_{L^\infty(C^q_\tau)}
+ \| \phi(\tau,\cdot)\, \overline{w}\|_{L^\infty(C^q_\tau)}\Big)\\
+ \int_{4q}^t\Big( \la\tau\ra\|
\,F(\tau,\cdot)\, \overline{w}\|_{L^\infty(C^q_\tau)} +{\sum}_{|I|+|J|\leq 2} \la\tau\ra^{-1}
\| \pa^I \Omega^J \phi(\tau,\cdot)\, \overline{w}\|_{L^\infty(C^q_\tau)}\Big)\, d\tau.
\end{multline}

The lemma follows from also using that $r |\pa \phi|\lesssim |r\pa_q \phi| +|S\phi|+|\Omega\phi|$.
\end{proof}

\section{Pointwise estimates from the Minkowski fundamental solution}\label{sec:sharpdecay}

In this section, we translate pointwise bounds on the inhomogeneous terms into pointwise bounds for the solution by using the fundamental solution of the Minkowski metric.

For any $\beta, \gamma, \eta\in\R$, we define the weighted $L^\infty$ norms
\beq\label{Linftywght}
\|G\|_{L_{\beta,\gamma,\eta}^{\infty}} = \| \la r\ra^{\beta} \la t\ra^{\gamma} \la t-r\ra^{\eta}  H(t, r)\|_{L_{t,r}^{\infty}}, \quad H(t,r)= \sum_0^2 \|\Omega^i G (t, r\omega)\|_{L^2(\S^2)}.
\eq

We use the following lemma (see Section 6 of \cite{Toh}).

\begin{lemma}\label{Minkdcy}
 Let $\psi$ solve
\beq\label{Mink1}
\Box \psi = G , \qquad \psi(0) = 0, \quad \pa_t \psi(0) = 0,
\eq
where $G$ is supported in $\{|x| \leq t+R_0\}$. Assume also that $2\leq\beta\leq 3$ and $\eta\geq -1/2$.  We define, for any arbitrary $\delta>0$,
\[
\tilde\eta = \left\{ \begin{array}{cc} \eta-\delta -2& \eta<1 ,  \cr -1 & \eta > 1
  \end{array} \right. .
\]

i) If $\gamma\geq 0$, we have

\beq\label{lindcy1}
r \psi(t, x)\lesssim \frac{1}{\la t-r\ra^{\beta+\gamma+\tilde\eta-1}} \|G\|_{L_{\beta,\gamma,\eta}^{\infty}},
\eq

ii) If $\gamma<0$, we have
\beq\label{lindcy2}
r \psi(t, x)\lesssim \frac{t^{-\gamma}}{\la t-r\ra^{\beta+\tilde\eta-1}} \|G\|_{L_{\beta,\gamma,\eta}^{\infty}},
\eq

iii) If $|x|\leq t-1$, and $\eta>1$, we have
\beq\label{lindcy3}
r\psi(t, x)\lesssim \ln \frac{t}{t-r}\|G\|_{L_{2, 0 , \eta}^{\infty}},
\eq
\end{lemma}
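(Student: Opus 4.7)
\medskip

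\textbf{Plan for Lemma~\ref{Minkdcy}.}

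The plan is to reduce the estimate to the classical Kirchhoff/John integral representation for $\Box \psi = G$ on Minkowski space with zero Cauchy data. Writing the solution as an integral over the backward light cone from $(t,x)$ and then exchanging to polar coordinates around the spatial origin, one obtains the standard identity (see Section~6 of \cite{Toh}, and the analogous computations in \cite{L0,L1})
\[
r\,|\psi(t,r\omega)|\;\lesssim\;\int_{0}^{t}\!\!\int_{|r-(t-s)|}^{\,r+(t-s)}\!\!\lambda\,\bar G(s,\lambda)\, d\lambda\, ds,
\]
where $\bar G(s,\lambda)$ is an angular mean of $|G|$ on the sphere of radius $\lambda$. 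Using Sobolev embedding on $\mathbb S^{2}$ together with invariance of angular derivatives under $\Omega$, the pointwise quantity $\bar G(s,\lambda)$ is controlled by $H(s,\lambda)$ as defined in \eqref{Linftywght}, and the weighted hypothesis then gives the pointwise bound $H(s,\lambda)\le \|G\|_{L^{\infty}_{\beta,\gamma,\eta}}\la\lambda\ra^{-\beta}\la s\ra^{-\gamma}\la s-\lambda\ra^{-\eta}$.

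The next step is to pass to null coordinates $q=s-\lambda$, $p=s+\lambda$. The integration domain becomes the parallelogram $\{|t-r|\le p\le t+r,\ -R_{0}\le q\le t-r\}$, the lower bound on $q$ coming from the support hypothesis $\supp G\subset\{|x|\le t+R_{0}\}$. The factor $\lambda\, d\lambda\, ds$ becomes $\tfrac12\lambda\, dp\, dq$ with $\lambda\asymp p$ in the regime $p\gtrsim |q|$ (which is where most of the mass lives when $r$ is not tiny). The estimate then reduces to controlling
\[
\int_{-R_{0}}^{\,t-r}\la q\ra^{-\eta}\Bigl(\int_{|t-r|}^{\,t+r} p^{1-\beta}\,\la (p+q)/2\ra^{-\gamma}\,dp\Bigr) dq.
\]

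The three cases correspond to how the $dp$ and $dq$ integrals behave. In case (i), $\gamma\ge 0$, the factor $\la (p+q)/2\ra^{-\gamma}\lesssim \la p\ra^{-\gamma}$, and the $dp$ integral (with $\beta+\gamma\ge 2$) is comparable to $\la t-r\ra^{2-\beta-\gamma}$ (or a logarithm in the borderline $\beta+\gamma=2$ regime, which is absorbed into the $\la t-r\ra^{-\delta}$ in the definition of $\tilde\eta$). The remaining $dq$ integral of $\la q\ra^{-\eta}$ over $[-R_{0},t-r]$ yields an additional factor that is $\la t-r\ra^{1-\eta}$ when $\eta<1$ (with a $\la t-r\ra^{\delta}$ cushion) and $O(1)$ when $\eta>1$, which exactly matches the definition of $\tilde\eta$ and produces the advertised $\la t-r\ra^{-(\beta+\gamma+\tilde\eta-1)}$. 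Case (ii), $\gamma<0$, follows by pulling out the growing time weight $\la s\ra^{-\gamma}\le \la t\ra^{-\gamma}$ outside the integral and repeating the argument with $\gamma$ set to $0$ in the $dp$ integral. Case (iii) is the logarithmic case $\beta=2$, $\gamma=0$, $\eta>1$ with $r\le t-1$: the $dp$ integral becomes $\int_{t-r}^{t+r} dp/p =\log\!\tfrac{t+r}{t-r}\lesssim \log\tfrac{t}{t-r}$, while the $dq$ integral of $\la q\ra^{-\eta}$ over $\mathbb R$ converges to a constant by $\eta>1$.

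The main technical obstacle is the endpoint analysis near the light cone $p=|t-r|$: the weight $p^{1-\beta}$ is singular there when $\beta\ge 2$, and one needs both the support cutoff $q\ge -R_{0}$ and a careful splitting of the domain into the region where $|q|\ll p$ (so that $\la (p+q)/2\ra\asymp p$) and the complementary pencil near $p=-q$, where $\lambda=(p-q)/2$ is of order one and the trivial bound $\lambda\,d\lambda\lesssim p\,dp$ must be replaced by $\lambda\,d\lambda$ directly, using the support condition to truncate the $q$-integration. Handling this endpoint uniformly across the three cases, and in particular tracking the borderline $\eta=1$ loss inside the definition of $\tilde\eta$, is where the argument needs the most care.
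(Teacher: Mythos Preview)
Your approach is essentially the paper's: both reduce to the radial majorant $H$ via Sobolev on $\mathbb S^2$, use positivity of the fundamental solution to pass to the explicit formula $rv=\tfrac12\int_{D_{tr}}\rho H\,ds\,d\rho$ over the rectangle $D_{tr}=\{0\le q\le t-r,\ t-r\le p\le t+r\}$ (after translating so $R_0=0$), and then estimate the weighted integral. The only difference is organizational: the paper performs a dyadic decomposition in $\rho=(p-q)/2$ rather than integrating directly in $(p,q)$, and its ``small $\rho$'' region $\rho<(t-r)/8$ is precisely the corner $p\approx q\approx t-r$ that you flag as the main obstacle --- there the paper simply observes $s\approx s-\rho\approx\langle t-r\rangle$, bounds each dyadic shell by $R^{3-\beta}\langle t-r\rangle^{-\gamma-\eta}$, and sums geometrically, which sidesteps any endpoint analysis (note the weight is $\rho\langle\rho\rangle^{-\beta}$, which is bounded near $\rho=0$, so there is no actual singularity, only the failure of your simplification $\langle\lambda\rangle\asymp p$).
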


\begin{proof}

Note first that, after a translation in time, we may assume that $R_0=0$.

We use the ideas from \cite{MTT}.  Define
\begin{equation}\label{Hdef}
  H(t,r)= \sum_0^2 \|\Omega^i G (t, r\omega)\|_{L^2(\S^2)}.
\end{equation}
By Sobolev embeddings on the sphere, we have $|G|\lesssim H$. Let $v$ be the radial solution to
\begin{equation}\label{1dbox}
  \Box v = H, \qquad v[0]=0.
\end{equation}

By the positivity of the fundamental solution, we have that $|\psi| \lesssim |v|$. On the other hand, we can write $v$ explicitly:
\begin{equation}\label{Hsol}
rv(t,r) = \frac12 \int_{D_{tr}} \rho H(s,\rho) ds d\rho,
\end{equation}
where $D_{tr}$ is the rectangle
\[
D_{tr}=\{ 0 \leq s - \rho \leq t-r, \quad  t-r \leq s+\rho \leq t+r
 \}.
\]

We partition the set $D_{tr}$ into a double dyadic manner
as
\[
D_{tr} = \bigcup_{R \leq t}  D_{tr}^R, \quad D_{tr}^R = D_{tr}\cap \{R<r<2R\}
\]
and estimate the corresponding parts of the above integral.

We clearly have
\[
\int_{D_{tr}^R} \rho H ds d\rho \lesssim \|G\|_{L_{\beta, \gamma,\eta}^{\infty}} \int_{D_{tr}^R} \rho^{1-\beta} \la s\ra^{-\gamma} \la s-\rho\ra^{-\eta} d\rho ds.
\]

We now consider two cases:

(i) $R < (t-r)/8$. Here we have $\rho \sim R$ and $s\approx s-\rho \approx \la t-r\ra$;
therefore we obtain

\[
\int_{D_{tr}^R} \rho^{1-\beta} \la s\ra^{-\gamma} \la s-\rho\ra^{-\eta} d\rho ds  \lesssim  R^{3-\beta} \la t-r\ra^{-\gamma-\eta},
\]
and after summation, using that $\beta\leq 3$, we obtain
\beq\label{ptwsecpt}
\sum_{R < (t-r)/8} \int_{D_{tr}^R} \rho H ds d\rho \lesssim \frac{\ln\la t-r\ra \la t-r\ra^{3-\beta} }{\la t-r\ra^{\gamma+\eta}} \lesssim \frac{1}{\la t-r\ra^{\beta+\tilde\eta}},
\eq
which is the desired bound in all cases.

(ii) $(t-r)/ 8 < R < t$. Here we have $\rho \sim R$ and $t\geq s\gtrsim R$. Denote $u=s-\rho$.

Assume first that $\gamma\geq 0$; then
\[
\int_{D_{tr}^R} \rho^{1-\beta} \la s\ra^{-\gamma} \la s-\rho\ra^{-\eta} d\rho ds  \lesssim R^{2-\beta-\gamma} \int_{0}^{t-r} \la u\ra^{-\eta} du \lesssim R^{2-\beta-\gamma} \la t-r\ra^{\mu(\eta)},
\]
where
\[
\mu(\eta)= \left\{ \begin{array}{cc} 1-\eta& \eta<1 ,  \cr 0 & \eta > 1
  \end{array} \right. .
\]

If $\beta+\gamma>2$, we obtain after summation
\beq\label{ptwsefar}
\sum_{R > (t-r)/8} \int_{D_{tr}^R} \rho H ds d\rho \lesssim \la t-r\ra^{2-\beta-\gamma+\mu(\eta)},
\eq
which is \eqref{lindcy1}.

On the other hand, if $\beta=2$ and $\gamma=0$, and taking into account that there are $\ln \frac{t}{t-r}$ dyadic regions when $(t-r)/ 8 < R < t$, we obtain \eqref{lindcy3} after summation.

Finally, if $\gamma< 0$ we obtain
\[
\int_{D_{tr}^R} \rho^{1-\beta} \la s\ra^{-\gamma} \la s-\rho\ra^{-\eta} d\rho ds  \lesssim R^{2-\beta} t^{-\gamma} \int_{0}^{t-r} \la u\ra^{-\eta} du \lesssim R^{2-\beta} t^{-\gamma} \la t-r\ra^{\mu(\eta)},
\]

Since $\beta\geq 2$, we obtain after summation
\beq\label{ptwsefar2}
\sum_{R > (t-r)/8} \int_{D_{tr}^R} \rho H ds d\rho \lesssim t^{-\gamma} \la t-r\ra^{2-\beta+\mu(\eta)},
\eq
which is \eqref{lindcy2}.
\end{proof}

\section{Setup for pointwise estimates}

In this section, we will slightly adjust $\Box_K$ to an operator closer to $\Box$ (with respect to the $(\tt, x)$ coordinates). Indeed, we let
\[
P = |g_K|^{1/4}(-g_K^{\tt\tt})^{-1/2}\Box_K (-g_K^{\tt\tt})^{-1/2} |g_K|^{-1/4}.
\]

$P$ is self-adjoint with respect to $d\tt dx$. More importantly, a quick computation yields that
\[
P = \pa_{\alpha} \left(g_K^{\alpha\beta} (-g_K^{\tt\tt}) \partial_{\beta}\right) + V, \quad V = |g_K|^{1/4}(-g_K^{\tt\tt})^{-1/2} \Box_K \left((-g_K^{\tt\tt})^{-1/2} |g_K|^{-1/4}\right).
\]
It is easy to see that $V\in S^Z(r^{-3})$. 

Let us first consider the Schwarzschild metric. In this case we have that for large $r$, $-g_S^{\tt\tt} = g_S^{r^*r^*}$ and $g_S^{\tt r^*} = 0$. We thus have
\[
P = \Box + P_{lr},
\]
where the long range spherically symmetric part $P_{lr}$ has the form
\beq\label{Plr}
P_{lr} = g_{lr}(r)\Delta_{\omega} + V, \qquad g_{lr} \in S^Z(r^{-3}), \qquad V \in S^Z(r^{-3}).
\eq

For the Kerr metric, we use the fact that the metric coefficients have the following properties:
\beq\label{KSdiff}
g_K^{\alpha\beta} - g_S^{\alpha\beta} \in S^Z(r^{-2}),
\eq
\beq\label{Kder}
\pa g_K \in S^Z(r^{-2}), \quad \pa^2 g_K \in S^Z(r^{-3})
\eq
Using \eqref{Plr} and \eqref{KSdiff} we see that we can write
\beq\label{Pdec}
P = \Box + P_{lr} + P_{sr},
\eq
where the short-range part $P_{sr}$ has the form
\beq\label{Psr}
P_{sr} = \partial_\alpha g_{sr}^{\alpha \beta}\partial_\beta, \quad g_{sr}^{\alpha \beta} \in S^Z(r^{-2}).
\eq

Using \eqref{Kder} we see that for any function $\phi$ we have
\beq\label{Peq0}
P \phi = (-g_K^{\tt\tt}) \Box_K\phi + h_1 \phi + h_2  \pa\phi, \quad h_1\in S^Z(r^{-3}), \quad h_2 \in S^Z(r^{-2}).
\eq

Now pick any multiindex $\alpha$. After commuting with vector fields, using \eqref{Pdec}, \eqref{Plr}, and \eqref{Psr}, we obtain
\beq\label{Peq}
P \phi_{\alpha} \in S^Z(1) (\Box_K \phi)_{\leq|\alpha|} + S^Z(r^{-3}) \phi_{\leq |\alpha|+6} + S^Z(r^{-2}) \pa \phi_{\leq |\alpha|+5},
\eq
which in turn implies, using \eqref{Pdec}
\beq\label{Boxeq}
\Box \phi_{\alpha} \in S^Z(1) (\Box_K \phi)_{\leq|\alpha|} + S^Z(r^{-3}) \phi_{\leq |\alpha|+6} + S^Z(r^{-2}) \pa \phi_{\leq |\alpha|+5}.
\eq

Moreover, by finite speed of propagation, and the assumption on the support of the initial data, the right hand side is supported in the forward light cone $\{|x| < \tt + R_0\}$.

We will use \eqref{Boxeq} in the next section to extract more decay for the solution.

\section{The bootstrap argument for the Einstein model}

We now prove Theorem~\ref{mainthm} by using a bootstrap argument. We first write
\begin{equation}\label{indata}
\mathcal{E}_N(0) = \mu_N \epsilon
\end{equation}
where $\mu_N>0$ is a fixed, small $N$-dependent constant to be determined below (see \eqref{ptwseapbdsbadLE}, \eqref{ptwseapbdsgoodLE}).

Let $N_1=\frac{N}2$. We will assume that the following a-priori bounds hold for some large constant $\tilde C$ independent of $\epsilon$ and $\tt$, and a fixed small $\delta>0$
\begin{equation}\label{enapbds}
\mathcal{E}_N(\tt) \leq \tilde C \mu_N\epsilon \la \tt\ra^{\delta},
\end{equation}
\begin{equation}\label{ptwseapbds}
|\phi_{\leq N_1+2}| \leq \frac{\epsilon \rs^{\delta}}{\la \tt\ra}, \qquad |\partial \phi_{\leq N_1+2}| \leq \frac{\epsilon}{\rs^{1-\delta}\la \tt-\rs\ra}
\end{equation}
\begin{equation}\label{ptwseapbdsLL}
|(\partial \phi_{TU})_{\leq N_1+2}| \leq \frac{\epsilon}{\la \tt\ra} 
\end{equation}

Clearly \eqref{enapbds}, \eqref{ptwseapbds} and \eqref{ptwseapbdsLL} hold for small times. We assume now that the bounds hold on some time interval $0\leq \tt\leq T$, and we improve the constants by $1/2$. By the continuity method this implies that the solution exists globally, and that the bounds also hold globally.

In order to improve \eqref{enapbds}, we show that, for small enough $\epsilon$, there is $C_N$ independent of $T$ so that
\beq\label{eneps}
\mathcal{E}_N(\tt) \leq C_N \la \tt\ra^{C_N \epsilon} \mathcal{E}_N(0), \quad 0\leq \tt\leq T
\eq

If we now additionally take $\tilde C = 2C_N$ and $\epsilon < \frac{\delta}{C_N}$ we thus improve the a-priori bound for $\mathcal{E}_N(\tt)$ to
\[
\mathcal{E}_N(\tt) \leq \frac12 \tilde C  \mu_N \epsilon \la \tt\ra^{\delta}.
\]

In order to improve the pointwise bounds, we will show that, for some fixed positive integer $m$, independent of $N$, we have
\begin{equation}\label{ptwseapbdsbadLE}
|\phi_{\leq N-m}| \lesssim \frac{\mathcal{E}_{N}(0)}{\la \tt-\rs\ra^{\delta}\la \tt\ra^{1-\delta}}, \qquad |\partial \phi_{\leq N-m}| \lesssim \frac{\tt^{\delta}\mathcal{E}_{N}(0)}{r\la \tt-\rs\ra^{1+\delta}}
\end{equation}
\beq\label{ptwseapbdsgoodLE}
|(\partial \phi_{TU})_{\leq N-m}| \lesssim \frac{\mathcal{E}_{N}(0)}{r \la \tt-\rs\ra^{1-\delta}} 
\eq

We can now pick a small $\mu_N$ to improve \eqref{ptwseapbds} and \eqref{ptwseapbdsLL}.

\subsection{The energy estimates}
We will now use assumptions \eqref{ptwseapbds} and \eqref{ptwseapbdsLL} to show \eqref{eneps} for small enough $\epsilon$.

By Gronwall's inequality and \eqref{wle}, it is enough to show that
\beq\label{eninhom}
 \|(\Box_K\phi)_{\leq N}\|_{LE^*_w[0, \tt]} \lesssim \int_0^{\tt} \frac{\epsilon}{\tau} \mathcal{E}_N(\tau) d\tau + \epsilon\mathcal{E}_N(\tt)
\eq

We can write, using \eqref{Pass}, \eqref{weaknull} and \eqref{Qass}:
\[
\Box_K \phi \in S^Z(1) (\pa\phi_{TU})^2 + S^Z(1) \pa\phi \tpa\phi
\]

After commuting with vector fields, and using \eqref{hghnull}, we also get that
\beq\label{sharp}
(\Box_K \phi)_{\leq N} \lesssim (\pa\phi_{TU})_{\leq N_1} (\partial \phi_{TU})_{\leq N} + \pa\phi_{\leq N_1} \tpa\phi_{\leq N} + \tpa\phi_{\leq N_1} \pa\phi_{\leq N} + r^{-1} \pa\phi_{\leq N_1} \pa\phi_{\leq N-1}
\eq

The first term is easy. By \eqref{ptwseapbdsLL} we have
\[
\|(\pa\phi_{TU})_{\leq N_1} (\partial \phi_{TU})_{\leq N} \|_{L^1[0,\tt]L^2} \lesssim \int_0^{\tt} \frac{\epsilon}{\tau} \mathcal{E}_N(\tau) d\tau.
\]

Similarly, the last term can be estimated in $L^1L^2$. Indeed, we note that \eqref{ptwseapbds} implies that
\[
|r^{-1} \pa\phi_{\leq N_1}| \lesssim \frac{\epsilon}{\tt}
\]
and thus
\[
\|r^{-1} \pa\phi_{\leq N_1} \pa\phi_{\leq N-1} \|_{L^1[0,\tt]L^2} \lesssim \int_0^{\tt} \frac{\epsilon}{\tau} \mathcal{E}_N(\tau) d\tau.
\]

For the second term, we divide it into two parts. When $r<R_1$ we have by \eqref{ptwseapbds}:
\[
\|\pa\phi_{\leq N_1} \tpa\phi_{\leq N}\|_{L^1[0,\tt]L^2(r<R_1)} \lesssim \int_0^{\tt} \frac{\epsilon}{\tau} \mathcal{E}_N(\tau) d\tau
\]

When $r>R_1$, we use \eqref{ptwseapbds} and the last term in \eqref{energy}:
\[
\|\pa\phi_{\leq N_1} \tpa\phi_{\leq N}\|^2_{LE^*[0,t]} \lesssim \int_0^{\tt} \int_{r>R_1} \frac{\epsilon^2 \tau^{2\delta}}{r \la \tau-\rs\ra^{2+2\delta}} |\tpa\phi_{\leq N}|^2 dV \lesssim \|\epsilon \la \tau-\rs\ra^{\frac{-1-\delta_1}2} \tpa\phi_{\leq N}\|_{L^2[0, t] L^2(r\geq R_1)}^2 \lesssim \left(\epsilon\mathcal{E}_N(\tt)\right)^2
\]

For the third term, note that \eqref{tgimp} and \eqref{ptwseapbds} imply that
\beq\label{tangptwse}
|\overline{\pa} \phi_{\leq N_1}| \lesssim \frac{\epsilon}{\la \tt\ra}
\eq

Using \eqref{tangptwse} gives
\[
\|\tpa\phi_{\leq N_1} \pa\phi_{\leq N}\|_{L^1[0,\tt]L^2} \lesssim \int_0^{\tt} \frac{\epsilon}{\tau} \mathcal{E}_N(\tau) d\tau
\]

Putting all these together we obtain \eqref{eninhom}.

\subsection{The decay estimates}

We now show that \eqref{ptwseapbdsbadLE} and \eqref{ptwseapbdsgoodLE} hold.

The proof uses an iteration procedure. The most important part here is to obtain pointwise decay rates of $\tt^{-1}$ near the trapped set for all components. We start with a weak decay rate of $\tt^{-1/2+C\epsilon}$ given by the slow growth $\tt^{C\epsilon}$ combined with the results of Section 3. We then use Lemma~\ref{Minkdcy} to improve decay in $r$, followed by Corollary~\ref{l:LinftyderL2} to improve the decay of derivatives. Lemma~\ref{lv:smallr} then allows us to turn the $r$-decay into $\tt$ decay. This yields an improved global decay rate of $\tt^{-1+C\epsilon}$, which is barely not enough. We then use Lemma~\ref{lem:transversalder} to improve the decay of the derivative of the good components $\pa\phi_{TU}$ to $\tt^{-1}$ near the cone. We can now go back to the iteration procedure, and use the improved bounds combined with Lemma~\ref{Minkdcy}, Corollary~\ref{l:LinftyderL2} and Lemma~\ref{lv:smallr} to improve the decay rate of all components to $\tt^{-1}$ away from the cone. This finishes the proof.

Let $N_2 = N-13$. We first note that \eqref{ptdecayu} and \eqref{deBound}, combined with the energy bounds \eqref{eneps}, yield the weak pointwise bounds
\beq\label{1stbd}
|\partial \phi_{\leq N_2}| \lesssim \frac{\la \tt\ra^{C\epsilon}\mathcal{E}_{N}(0)}{r \la \tt-\rs\ra^{1/2}}, \qquad |\phi_{\leq N_2}| \lesssim \frac{\la \tt-\rs\ra^{1/2}\mathcal{E}_{N}(0)}{\la \tt\ra^{1-C\epsilon}}
\eq

We now need to improve the decay of $\phi_{\leq N-m}$ and $\pa\phi_{\leq N-m}$. To that extent, we will use Lemma~\ref{Minkdcy}, followed by Lemma~\ref{lv:smallr} and Corollary~\ref{l:LinftyderL2}.

We cannot apply Lemma~\ref{Minkdcy} directly. On one hand, we have no control on the solution for $r\ll 2M$, and on the other hand, the initial data is not trivial. Instead, let
\[
\chi = \chi_1(\rs) \chi_2(\tt)
\]
Here $\chi_1\equiv 1$ for $\rs\geq R\gg M$ and supported in $\rs\geq R/2$, while $\chi_2\equiv 1$ for $\tt\geq 1$ and supported in $\tt\geq 1/2$.

We now consider $\psi_{\alpha\beta} = \chi \phi_{\alpha\beta}$. Using \eqref{Boxeq}, we see that $\psi$ satisfies the system
\[
\Box (\psi_{\leq n}) = G_n, \quad G_n \in S^Z(r^{-2}) \pa\phi_{\leq n+5} + S^Z(r^{-3}) \phi_{\leq n+6} + S^Z(1) (\pa\phi_{\leq n})^2
\]
with trivial initial data, and $G_n$ supported in the region $r\geq R/2$. Using \eqref{1stbd}, we see that, for all $n\leq N_3 := N_2-12$, we have
\[
G_{n+6} \lesssim \mathcal{E}_{N}(0) \left(\frac{\la\tt\ra^{C\epsilon}}{r^3\la \tt-\rs\ra^{1/2}} + \frac{\la \tt-\rs\ra^{1/2}}{r^3\la\tt\ra^{1-C\epsilon}} + \frac{\la\tt\ra^{C\epsilon}}{r^2\la \tt-\rs\ra}\right)
\]

We now apply Lemma~\ref{Minkdcy}. The first term on the right hand side is controlled by the other two terms. For the second term we use \eqref{lindcy1} with $\beta=3$, $\gamma = 1-C\epsilon$ and $\eta=-1/2$. For the third term, we use \eqref{lindcy1} with $\beta=2$, $\gamma = -C\epsilon$ and $\eta=1-C\epsilon$. We obtain
\beq\label{phiimp2}
|\phi_{\leq N_3}| \lesssim  \frac{\tt^{C\epsilon}}{r} \mathcal{E}_{N}(0).
\eq

We now plug in the bounds \eqref{phiimp2} and \eqref{1stbd} into Corollary~\ref{l:LinftyderL2}. We thus obtain for $N_4= N_3-n$ with $n$ from Corollary~\ref{l:LinftyderL2}:
\[
\!  \|\pa \phi_{N_4}\|_{L^\infty(C_T^{R})} \lesssim
\frac{1}{R} \frac{T^{C\epsilon}}{R} \mathcal{E}_{N}(0) + R \left(\frac{T^{C\epsilon}}{RT^{1/2}} \mathcal{E}_{N}(0)\right)^2 \lesssim \frac{T^{C\epsilon}}{R^2} \mathcal{E}_{N}(0)
\]
\[
\!  \|\pa \phi_{N_4}\|_{L^\infty(C_T^{U})} \lesssim
\frac{1}{U} \frac{T^{C\epsilon}}{R} \mathcal{E}_{N}(0) + T \left(\frac{T^{C\epsilon}}{RU^{1/2}} \mathcal{E}_{N}(0)\right)^2 \lesssim \frac{T^{C\epsilon}}{RU} \mathcal{E}_{N}(0)
\]
The last two inequalities can be written as
\beq\label{derimp2}
|\pa\phi_{\leq N_4}| \lesssim \frac{\tt^{1+C\epsilon}}{r^2 \la \tt-\rs\ra} \mathcal{E}_{N}(0)
\eq

We now use Lemma~\ref{lv:smallr}. Note that \eqref{phiimp2} and \eqref{derimp2} yield
\[
\|\la r\ra \phi_{\leq N_4}\|_{LE^1( C_{T}^{<T/2})} \lesssim T^{1/2+C\epsilon} \mathcal{E}_{N}(0)
\]

Moreover, \eqref{1stbd} implies that
\[
\|(\Box_K\phi)_{\leq N_4} \|_{LE^*( C_{T}^{<T/2})} \lesssim T^{-1/2 + C\epsilon} \mathcal{E}_{N}(0)
\]

The two inequalities above and Lemma~\ref{lv:smallr} with $N_5 = N_4-n$ give us
\[
\| \phi_{\leq N_5} \|_{LE^1(C_{T}^{<T/2})} \lesssim T^{-1/2 + C\epsilon} \mathcal{E}_{N}(0)
\]
which combined with the Sobolev embeddings from Lemma \ref{l:l2tolinf} give for $N_6= N_5-13$:
\beq\label{phiimp3}
|\phi_{\leq N_6}| \lesssim \tt^{-1+ C\epsilon} \mathcal{E}_{N}(0)
\eq

We now plug in the bounds \eqref{phiimp3} and \eqref{1stbd} into Corollary~\ref{l:LinftyderL2}. We thus obtain for $N_7= N_6-n$
\[
\!  \|\pa \phi_{\leq N_7}\|_{L^\infty(C_T^{R})} \lesssim
\frac{1}{R} \frac{T^{C\epsilon}}{T} \mathcal{E}_{N}(0) + R \left(\frac{T^{C\epsilon}}{RT^{1/2}} \mathcal{E}_{N}(0)\right)^2 \lesssim \frac{T^{C\epsilon}}{RT} \mathcal{E}_{N}(0)
\]
which combined with \eqref{derimp2} gives
\beq\label{derimp3}
|\pa\phi_{\leq N_7}| \lesssim \frac{\tt^{C\epsilon}}{r \la \tt-\rs\ra} \mathcal{E}_{N}(0)
\eq
This finishes the proof of \eqref{ptwseapbdsbadLE} when $\rs>\tt/2$.

Note also that \eqref{tgimp}, \eqref{phiimp3} and \eqref{derimp3} give
\beq\label{tderimp3}
|\tpa\phi_{\leq N_7-2}| \lesssim \frac{\tt^{C\epsilon}}{r \la \tt\ra} \mathcal{E}_{N}(0)
\eq

We now use the fact that $\psi_{TU}$ actually satisfies better decay estimates. Indeed, note first that
\[
\Box (T^{\alpha} U^{\beta}\phi_{\alpha\beta}) - T^{\alpha} U^{\beta} \Box \phi_{\alpha\beta} \in S^Z(r^{-2}) \phi_{\leq 1}
\]

Using \eqref{Pdec} and \eqref{Peq0} we obtain
\[
\Box \phi_{TU} \in S^Z(1) (\Box_K \phi)_{TU} + S^Z(r^{-2}) \phi_{\leq 6}
\]
and since
\[
(\Box_K \phi)_{TU} \in S^Z(1) \pa\phi\tpa\phi
\]
we thus have
\[
\Box \phi_{TU} \in S^Z(1) \pa\phi\tpa\phi + S^Z(r^{-2}) \phi_{\leq 6}
\]

After commuting with vector fields (in particular using \eqref{hghnull}) and applying the cutoff we thus obtain
\[
\Box (\psi_{TU})_{\leq m} = H_m, \quad H_m \in S^Z(r^{-2}) \phi_{\leq m+6} + S^Z(1) \pa\phi_{\leq m}\tpa\phi_{\leq m} + S^Z(r^{-1}) (\pa\phi_{\leq m})^2
\]

Using \eqref{phiimp3}, \eqref{derimp3} and \eqref{tderimp3}, we see that
\beq\label{Hm}
H_m \lesssim \frac{\mathcal{E}_{N}(0) }{r^2\la \tt\ra^{1-C\epsilon}}, \quad m\leq N_7-2
\eq

Let $N_8=N_7-6$. We now apply Lemma~\ref{lem:transversalder} with $\overline{w}(q) = \la q\ra^{1-\delta}$ to $(\psi_{TU})_{\leq N_8}$. Note first that, due to \eqref{derimp3} and \eqref{phiimp3} we have
\[
\sup_{4q\leq
\tau\leq \tt}
\Big(\|\,q  \, \pa \phi_{\leq N_8}(\tau,\cdot)\, \overline{w}\|_{L^\infty(C^q_\tau)}
+{\sum}_{|I|\leq 1} \| Z^I\phi_{\leq N_8}(\tau,\cdot)\, \overline{w}\|_{L^\infty(C^q_\tau)}\Big) \lesssim \mathcal{E}_{N}(0)
\]

Moreover, \eqref{phiimp3} implies that
\[
\int_{4q}^{\tt} {\sum}_{|I|\leq 2} \la\tau\ra^{-1}
\| \Omega^I \phi_{\leq N_8}(\tau,\cdot)\, \overline{w}\|_{L^\infty(C^q_\tau)}\, d\tau \lesssim \int_{4q}^{\tt} \la\tau\ra^{-1} \frac{\la q\ra^{1-\delta} \mathcal{E}_{N}(0)}{\la \tau\ra^{1-C\epsilon}} d\tau \lesssim \mathcal{E}_{N}(0).
\]
Finally, we obtain by \eqref{Hm} that
\[
\int_{4q}^{\tt} \la\tau\ra\|
\,H_m (\tau,\cdot)\, \overline{w}\|_{L^\infty(C^q_\tau)} \lesssim \int_{4q}^{\tt} \la\tau\ra \frac{\la q\ra^{1-\delta} \mathcal{E}_{N}(0) }{\la \tau\ra^{3-C\epsilon}} d\tau \lesssim \mathcal{E}_{N}(0).
\]

Lemma~\ref{lem:transversalder} thus implies, in conjunction with \eqref{derimp3}, that
\beq\label{derimp4}
|\pa (\psi_{TU})_{\leq N_8}| \lesssim \frac{\mathcal{E}_{N}(0)}{r\la \tt-\rs\ra^{1-\delta}}.
\eq

This finishes the proof of \eqref{ptwseapbdsgoodLE}.

Finally, to obtain a decay rate of $1/{\tt}$ in the interior, we see that, using \eqref{Boxeq} and \eqref{sharp}, we can write our system as
\[
\Box (\psi_{\leq m}) = J_m, \quad J_m \in S^Z(r^{-2}) \pa\phi_{\leq m+5} + S^Z(r^{-3}) \phi_{\leq m+6} + S^Z(1) (\pa\phi_{TU})^2_{\leq m} + S^Z(1) \pa\phi_{\leq m}\tpa\phi_{\leq m} + S^Z(r^{-1}) (\pa\phi_{\leq m})^2
\]
and $J_m$ is supported in the region $\{\tt\geq 1/2, \rs\geq R/2\}$. Due to the improved bounds \eqref{phiimp3}, \eqref{derimp3} and \eqref{derimp4} we obtain
\[
J_{m+6} \lesssim \mathcal{E}_{N}(0) \left(\frac{t^{C\epsilon}}{r^3\la \tt-\rs\ra}+ \frac{1}{r^2\la \tt-\rs\ra^{2-2\delta}} \right), \quad m\leq N_9 := N_8-8.
\]

We now apply Lemma~\ref{Minkdcy} and in particular \eqref{lindcy2} to control the last term. We obtain
\beq\label{phiimp5}
\phi_{\leq N_9} \lesssim \frac{1}{r} \mathcal{E}_{N}(0), \quad \rs<3\tt/4.
\eq

Corollary~\ref{l:LinftyderL2} thus implies, with $N_{10}= N_9-n$:
\beq\label{derimp5}
\pa\phi_{\leq N_{10}} \lesssim \frac{1}{r^2} \mathcal{E}_{N}(0), \quad \rs<3\tt/4.
\eq

We now use Lemma~\ref{lv:smallr}. Note that \eqref{phiimp5} and \eqref{derimp5} yield
\[
\|\la r\ra \phi_{\leq N_{10}}\|_{LE^1( C_{T}^{<T/2})} \lesssim T^{1/2} \mathcal{E}_{N}(0)
\]

Moreover, \eqref{derimp3} implies that
\[
\|(\Box_K \phi)_{\leq N_{10}} \|_{LE^*( C_{T}^{<T/2})} \lesssim T^{-1/2} \mathcal{E}_{N}(0)
\]

The two inequalities above and Lemma~\ref{lv:smallr} give us for $N_{11}= N_{10}-n$:
\[
\| \phi_{\leq N_{11}} \|_{LE^1(C_{T}^{<T/2})} \lesssim T^{-1/2} \mathcal{E}_{N}(0)
\]
which combined with the Sobolev embeddings from Lemma \ref{l:l2tolinf} give with $N_{12}= N_{11}-13$
\beq\label{tphiimp5}
|\phi_{\leq N_{12}}| \lesssim \frac{\mathcal{E}_{N}(0)}{\la \tt\ra}, \quad \rs\leq \tt/2
\eq

Finally, one last application of Corollary~\ref{l:LinftyderL2} with $N_{13}= N_{12}- n$ gives
\beq\label{tderimp5}
|\pa\phi_{\leq N_{13}}| \lesssim \frac{\mathcal{E}_{N}(0)}{r \la \tt\ra}, \quad \rs\leq \tt/2
\eq

This finishes the proof of \eqref{ptwseapbdsbadLE} if we pick $N$ large enough so that $N_{13}\geq N_1$.

\section*{Acknowledgments}
H.L. was supported in part by NSF grant DMS-1500925 and Simons Collaboration Grant 638955. M.T. was supported in part by the Simons collaboration Grant 586051. We would also like to thank the Mittag Leffler Institute for their hospitality during the Fall 2019 program in Geometry and Relativity.


\begin{thebibliography}{1}

\bibitem{Al} S. Alinhac: \textit {An example of blowup at infinity for a quasilinear wave equation}, Asterisque 284 (2003), 1--91.

\bibitem{Al2} S. Alinhac: \textit{On the Morawetz-Keel-Smith-Sogge inequality for the wave equation
on a curved background}. Publ. Res. Inst. Math. Sci. {\bf 42(3)}
(2006), 705--720

\bibitem{AB}
L.~Andersson, P.~Blue: \textit{Hidden symmetries and decay for the wave equation on the Kerr
spacetime}, Annals of Mathematics, Volume \textbf{182} (2015), Issue 3, 787--853

\bibitem{AAG} Y.~Angelopoulos, S.~Aretakis, and D.~Gajic: \textit{Nonlinear scalar perturbations of extremal Reissner-Nordstr\"om spacetimes}. Ann. PDE 6 (2020), no. 2, Paper No. 12, 124 pp.

\bibitem{Ar} S.~Aretakis: \textit{Decay of axisymmetric solutions of the wave equation on extreme Kerr backgrounds}, Journal of Functional Analysis {\bf 263}, no. 9 (2012), 2770--2831.

\bibitem{BS1}
P.~Blue, A.~Soffer: \textit{Semilinear wave equations on the Schwarzschild manifold I: local
decay estimates}. Advances in Differential Equations \textbf{8}
(2003), 595--614.

\bibitem{BS2} P.~Blue, A.~Soffer: \textit{The wave equation on the
    Schwarzschild metric II: Local decay for the spin-2 Regge-Wheeler
    equation}, J. Math. Phys., {\bf 46} (2005), 9pp.

\bibitem{BSter} P.~Blue and J.~Sterbenz: \textit{Uniform decay of local energy and the semi-linear wave equation on Schwarzschild space}, Comm. Math. Phys. \textbf{268} (2006), no. 2, 481--504.

\bibitem{BH} J.-F. Bony and D. H\"afner: \textit{The semilinear wave equation on asymptotically Euclidean manifolds}. Comm. Partial Differential Equations 35 (2010), no. 1, 23--67.

\bibitem{Ch} D. Christodoulou: \textit{Global solutions of nonlinear hyperbolic equations for small initial data}. Comm. Pure Appl. Math., 39(2) (1986), 267--282.

\bibitem{CK2} D. Christodoulou and S. Klainerman: \textit{The global nonlinear stability of the Minkowski space}. Princeton University Press, 1993.

\bibitem{DHRT21} M. Dafermos, G Holzegel, I. Rodnianski and M. Taylor \textit{The nonlinear stability of the Schwarzschild family of black holes} \arXiv{2104.08222}.

\bibitem{DR1} M. Dafermos and I. Rodnianski: \textit{The red-shift effect and radiation decay on black hole spacetimes}.  Comm. Pure Appl. Math. \textbf{62} (2009), no. 7, 859--919.

\bibitem{DR2} M. Dafermos and I. Rodnianski: \textit{A note on energy currents and decay for the wave equation on a Schwarzschild background}.  \arXiv{0710.0171}.

\bibitem{DaRoNotes} M.~Dafermos, I.~Rodnianski: \textit{Lectures on
    black holes and linear waves}, \arXiv{0811.0354}.

\bibitem{DRS16} M.~Dafermos, I.~Rodnianski, and Y.~Shlapentokh-Rothman: \textit{Decay for solutions of the wave equation on Kerr
exterior spacetimes III: The full subextremal case $|a|<M$}, Annals of Math. 183 (2016), no. 3, 787-913.


\bibitem{DP} Y. Deng and F. Pusateri: \textit{On the global behavior of weak null quasilinear wave equations}, Comm. Pure Appl. Math. 73 (2020), no. 5, 1035--1099.

\bibitem{FM} M. Facci and J. Metcalfe: \textit{Global existence for quasilinear wave equations satisfying the null condition}, \arXiv{2208.12659}.


\bibitem{F21} A. Fang \textit{Nonlinear stability of the slowly-rotating Kerr-de Sitter family}, \arXiv{2112.07183}.


\bibitem{F22} A. Fang \textit{Linear stability of the slowly-rotating Kerr-de Sitter family}, \arXiv{2207.07902}.
    
\bibitem{GKS1} E. Giorgi, S. Klainerman and J. Szeftel: \textit{A general formalism for the stability of Kerr}, \arXiv{2002.02740}.

\bibitem{GKS2} E. Giorgi, S. Klainerman and J. Szeftel: \textit{Wave equations estimates and the nonlinear stability of slowly rotating Kerr black holes}, \arXiv{2205.14808}.

\bibitem{HE} S. W. Hawking and G. F. R. Ellis: \textit{The large scale structure of space-time}.  Cambridge Monographs on Mathematical Physics, No. 1. London, New York:  Cambridge University Press 1973.

\bibitem{H} K. Hidano: \textit{The global existence theorem for quasi-linear wave equations with multiple speeds}, Hokkaido Math. J. 33 (3) (2004) 607--636, MR2104831.

\bibitem{HV16} P. Hintz and A. Vasy \textit{The global non-linear stability of the Kerr-de Sitter family of black holes}, Acta Math., 220:1--206 (2018)

\bibitem{HV20} P. Hintz and A. Vasy \textit{Stability of Minkowski space and polyhomogeneity of the metric}, Ann. PDE 6 (2020), no. 1, Paper No. 2, 146 pp.

 \bibitem{IK15}   A. D. Ionescu, S. Klainerman, \textit{On the global stability of the wave-map equation in Kerr Spaces with small angular momentum}. Annals of PDE {\bf 1}(2015), 1-78.


\bibitem{J} F. John:  \textit{Blow-up of solutions of nonlinear wave equations in three space dimensions }, Manuscripta Math. 28 (1979), no. 1-3, 235--268.

\bibitem{K18} Joe Keir: \textit{The weak null condition and global existence using the
  p-weighted energy method}, \arXiv{1808.09982}.

\bibitem{K1} S. Klainerman: \textit{Long time behaviour of solutions to nonlinear wave equations}. In Proceedings
of the International Congress of Mathematicians, Vol. 1, 2 (Warsaw, 1983), pages 1209--1215

\bibitem{K2} S. Klainerman: \textit{The null condition and global existence to nonlinear wave equations}. Nonlinear systems of partial differential equations in applied mathematics, Part 1 (Santa Fe, N.M., 1984), 293--326,
Lectures in Appl. Math., 23, Amer. Math. Soc., Providence, RI, 1986.

\bibitem{KP} S. Klainerman, G. Ponce: \textit{Global, small amplitude solutions to nonlinear evolution equations}.
Comm. Pure Appl. Math. 36 (1983), 133--141.

\bibitem{KS} S. Klainerman and T. Sideris: \textit{On almost global existence for nonrelativistic wave equations in 3D}. Comm. Pure Appl. Math. 49 (1996) 307--321.

\bibitem{KS1} S. Klainerman and J. Szeftel: \textit{Construction of GCM spheres in perturbations of Kerr}, \arXiv{1911.0069}, to appear in Annals of PDE.

\bibitem{KS2} S. Klainerman and J. Szeftel: \textit{Effective results in uniformization and intrinsic GCM
spheres in perturbations of Kerr}, \arXiv{1912.12195}, to appear in Annals of PDE.

\bibitem{KS3} S. Klainerman and J. Szeftel: \textit{Kerr stability for small angular momentum}, \arXiv{2104.11857}.


\bibitem{L90} H. Lindblad, \textit{On the lifespan of solutions of nonlinear wave equations with small initial data}. Comm. Pure Appl. Math. 43 (1990), no. 4, 445--472.


\bibitem{L0} H. Lindblad, \textit{Global solutions of nonlinear wave equations}, Comm. Pure Appl. Math. {\bf 45} (9) (1992), 1063-1096.

\bibitem{L1} H. Lindblad, \textit{Global solutions of quasilinear wave equations}, Amer. J. Math. {\bf 130} (2008), no. 1, 115--157.

\bibitem{L2} H. Lindblad, \textit{On the asymptotic behavior of solutions to the Einstein vacuum equations in wave coordinates}, Comm. Math. Phys. 353 (2017), no. 1, 135--184.

\bibitem{LNS} H. Lindblad, M. Nakamura, C. Sogge,
\textit{ Remarks on global solutions for nonlinear wave equations
under the standard null conditions}, J. Differential Equations 254 (2013), 1396-1436.

\bibitem{LR1} H. Lindblad and I. Rodnianski: \textit{The weak null condition for Einstein's equations}.  C. R. Math. Acad. Sci. Paris 336 (2003), no. 11, 901--906.

\bibitem{LR2} H. Lindblad and I. Rodnianski: \textit{Global Existence for the Einstein Vacuum Equations in Wave Coordinates}. Comm. Math. Phys. 256 (2005), 43--110.

\bibitem{LR3} H. Lindblad and I. Rodnianski: \textit{The global stability of Minkowski space- time in harmonic gauge}. Annals of Mathematics, 171 (2010), 1401--1477.

\bibitem{LS} H. Lindblad and V. Schlue: \textit{Scattering from infinity for semilinear wave equations satisfying the null condition or the weak null condition}, \arXiv{1711.00822}.

\bibitem{LT1}
H. Lindblad, M. Tohaneanu: \textit{Global existence for quasilinear wave equations close to Schwarzschild}, Communications in Partial Differential Equations Volume 43, 2018 - Issue 6, 893--944.

\bibitem{LT2}
H. Lindblad, M. Tohaneanu: \textit{A local energy estimate for wave equations on metrics asymptotically close to Kerr}, Ann. Henri Poincar\'e 21, 2020, no. 11, 3659--3726.


\bibitem{LV22}
O. Lindblad-Petersen, A. Vasy \textit{Wave equations in the Kerr-de Sitter spacetime: the full subextremal range},\arXiv{2112.01355}

\bibitem{Looi} S.-Z. Looi: \textit{Improved decay for quasilinear wave equations close to asymptotically flat spacetimes including black hole spacetimes}, \arXiv{2208.05439}.

\bibitem{LooiT} S.-Z. Looi and M. Tohaneanu: \textit{Global existence and pointwise decay for the null condition}, \arXiv{2204.03626}.

\bibitem{Luk} J. Luk: \textit{The null condition and global existence for nonlinear wave equations on slowly rotating Kerr spacetimes}.
Journal Eur. Math. Soc., {\bf 15} (5) (2013):1629--1700.


\bibitem{MMTT} J. Marzuola, J. Metcalfe, D. Tataru, M. Tohaneanu: \textit{Strichartz estimates on Schwarzschild black hole backgrounds}, Comm. Math. Phys. 293 (2010), no. 1, 37--83.

\bibitem{M22}G. Mavrogiannis \textit{Decay for quasilinear wave equations on cosmological black hole backgrounds (Doctoral thesis)}. https://doi.org/10.17863/CAM.86983

\bibitem{MNS} J. Metcalfe, M. Nakamura, C. Sogge: \textit{Global existence of solutions to multiple speed
systems of quasilinear wave equations in exterior domains}. Forum Math. 17, (2005) 133--168

\bibitem{MS1} J. Metcalfe, C. D. Sogge: \textit{Hyperbolic trapped rays and global existence of quasilinear wave
equations}. Invent. Math. 159 (2005), no. 1, 75--117.

\bibitem{MS2} J. Metcalfe and C. Sogge: \textit{Global existence of null-form wave equations in exterior domains}. Math. Z. 256, (2007) 521--549

\bibitem{MS} J.~Metcalfe and C.~Sogge: \textit{Long-time existence of quasilinear wave equations
exterior to star-shaped obstacles via energy methods}. SIAM J. Math.
Anal. {\bf 38(1)}(2006), 188-209 (electronic)

\bibitem{MTT} J. Metcalfe, D. Tataru, M.~Tohaneanu: \textit{Price's law on nonstationary space-times}, Adv. Math. \textbf{230} (2012), no. 3, 995--1028.

\bibitem{M} C. Morawetz: \textit{Time decay for the nonlinear Klein-Gordon equations}.  Proc. Roy. Soc. Ser. A. {\bf 306} (1968), 291--296.

\bibitem{SW} C. Sogge and C. Wang: \textit{Concerning the wave equation on asymptotically Euclidean manifolds}. J. Anal. Math. 112 (2010) 1--32.

\bibitem{Sh} D. Shen : \textit{Construction of GCM hypersurfaces in perturbations of Kerr},
\arXiv{2205.12336}.

\bibitem{ST} T. Sideris, and S.-Y. Tu: \textit{Global Existence for Systems of Nonlinear Wave Equations in 3D with Multiple Speeds}. SIAM J. Math. Anal. 33 (2001), no. 2, 477--488.

\bibitem{STz} W. Strauss and K. Tsutaya: \textit{Existence and blow up of small amplitude nonlinear waves with a negative potential}. Discr. Cont. Dynamical Systems, 3(2) (1997), 175--188.

\bibitem{TT} D. Tataru and M. Tohaneanu: \textit{Local energy estimate on Kerr black hole backgrounds}, Int. Math. Res. Not. IMRN 2011, no. 2,  248--292.

\bibitem{Toh} M. Tohaneanu: \textit{Pointwise decay for semilinear wave equations on Kerr spacetimes.} Lett. Math. Phys. 112 (2022), no. 1, Paper No. 6, 30 pp.

\bibitem{WY} C. Wang, X. Yu. \textit{Global existence of null-form wave equations on small asymptotically Euclidean manifolds}, Journal of Functional Analysis 266 (2014) 5676--5708.

\bibitem{Y} S. Yang: \textit{Global solutions of nonlinear wave equations in time
dependent inhomogeneous media}, Arch. Ration. Mech. Anal. 209 (2013), no. 2, 683--728.

\bibitem{Yu1} D. Yu: \textit{Modified wave operators for a scalar quasilinear wave equation satisfying the weak null condition}, Comm. Math. Phys. 382 (2021), no. 3, 1961--2013.

\bibitem{Yu2} D. Yu: \textit{Asymptotic completeness for a scalar quasilinear wave equation satisfying the weak null condition}, \arXiv{2105.11573}.






\end{thebibliography}
\end{document}